\renewcommand{\i}{\mathrm{i}}
\newtheorem{definition}{Definition}
\newtheorem{prop}{Proposition}
\newcommand{\revision}[1]{{\color{black}#1}}
\begin{document}

\title{Analytical prediction of specific spatiotemporal patterns in nonlinear oscillator networks with distance-dependent time delays}

\author{Roberto C. Budzinski}
\thanks{These authors contributed equally}
\author{Tung T. Nguyen}
\thanks{These authors contributed equally}
\author{Gabriel B. Benigno}
\thanks{These authors contributed equally}
\affiliation{Department of Mathematics, Western University, London, ON, Canada}
\affiliation{Brain and Mind Institute, Western University, London, ON, Canada}
\affiliation{Western Academy for Advanced Research, Western University, London, ON, Canada}
\author{Jacqueline Đo\`{a}n}
\affiliation{Department of Mathematics, Western University, London, ON, Canada}
\affiliation{Brain and Mind Institute, Western University, London, ON, Canada}
\affiliation{Western Academy for Advanced Research, Western University, London, ON, Canada}
\author{J\'an Min\'a{\v c}}
\affiliation{Department of Mathematics, Western University, London, ON, Canada}
\affiliation{Western Academy for Advanced Research, Western University, London, ON, Canada}
\author{Terrence J. Sejnowski}
\affiliation{The Salk Institute for Biological Studies, La Jolla, CA, USA}
\affiliation{Division of Biological Sciences, University of California at San Diego, La Jolla, CA, USA}
\author{Lyle E. Muller}\email{lmuller2@uwo.ca}
\affiliation{Department of Mathematics, Western University, London, ON, Canada}
\affiliation{Brain and Mind Institute, Western University, London, ON, Canada}
\affiliation{Western Academy for Advanced Research, Western University, London, ON, Canada}

\begin{abstract}
We introduce an analytical approach that allows predictions and mechanistic insights into the dynamics of nonlinear oscillator networks with heterogeneous time delays. We demonstrate that time delays shape the spectrum of a matrix associated to the system, leading to the emergence of waves with a preferred direction. We then create analytical predictions for the specific spatiotemporal patterns observed in individual simulations of time-delayed Kuramoto networks. This approach generalizes to systems with heterogeneous time delays at finite scales, which permits the study of spatiotemporal dynamics in a broad range of applications.
\end{abstract}

\maketitle

\section{Introduction}

What is the effect of heterogeneous time delays in networked systems? This question is difficult to treat analytically in the context of multiple distributed time delays. In recent work \cite{muller2016rotating}, we studied intracranial electrophysiological recordings from human clinical patients during sleep. We found that the 11-15 Hz sleep ``spindle'' oscillation, a brain rhythm important for learning and memory \cite{sejnowski2000we}, was not perfectly synchronized with zero phase difference across the cortex; rather, sleep spindles are organized into rotating waves that travel in a preferred direction (see \href{https://elifesciences.org/articles/17267#media1}{\textcolor{Cerulean}{Movie 1}} in \cite{muller2016rotating}). Importantly, the propagation speed of the observed waves is consistent with the axonal conduction speed of the long-range fiber network in the cortex (3-5 ms$^{-1}$ \cite{girard2001feedforward}). This set of observations raises an important question:~how do these fibers, with no major anisotropy, create a specific spatiotemporal structure with a preferred chirality?

In this work, we analyze a time-delay Kuramoto model to address this question. Utilizing a recently reported analytical approach to the Kuramoto dynamics \cite{muller2021algebraic}, we introduce a complex-valued delay operator. This operator shapes the dynamics of the Kuramoto system into waves traveling across the network. The combination of this delay operator and the adjacency matrix determines these dynamics through their effect on eigenvalues in the complex plane, thus providing mechanistic insights into the effect of heterogeneous time delays. The approach introduced here offers a mathematical description for the dynamics of time-delayed networks, an important open problem in physics \cite{atay2010complex} with many applications in neuroscience \cite{schroter2017micro}, engineering \cite{papachristodoulou2010effects}, and technology \cite{liao2000global}. In general, approaches to systems with heterogeneous time delays center on numerical simulations, and no coherent analytical approach currently exists \cite{tewarie2019spatially,petkoski2022normalizing}. Importantly, while this question first arose from observations of neural dynamics in the human cortex during sleep, the delay operator we introduce here is general to studying the effect of distributed time delays in networks at finite scales, potentially allowing insight into these dynamics in a broad range of systems \cite{lee2009large,papachristodoulou2006synchonization,roberts2019metastable}.

\section{Delay operator}

We start with the standard Kuramoto model (KM) \cite{kuramoto1975self, rodrigues2016kuramoto,acebron2005kuramoto} and then consider the model with distance-dependent time delays \cite{jeong2002time,ko2007effects, petkoski2016heterogeneity}. The original KM on a general network of $N$ nodes is defined by:
\begin{equation}
\dot{\theta}_i(t) = \omega_i + \epsilon \sum_{j=1}^{N} A_{ij} \sin( \theta_j(t) - \theta_i(t) )\,,
\label{eq:original_km_no_delay}
\end{equation}
where $\theta_i \in [-\pi,\pi)$ represents the state variable (phase) of oscillator $i$ at time $t$, $\omega_i$ is the intrinsic angular frequency, $\epsilon$ scales the coupling strength, and $A_{ij} \in \{0,1\}$ represents the elements of the adjacency matrix. The coupling of two connected oscillators $i$ and $j$ causes their phases to attract \cite{rodrigues2016kuramoto,acebron2005kuramoto,strogatz2000kuramoto,arenas2008synchronization}. 

Time delays have been observed to be an important mechanism underlying the generation of traveling waves in the brain \cite{muller2018cortical, davis2021spontaneous, roberts2019metastable, breakspear2010generative}. With this in mind, we consider a time-delay Kuramoto model (dKM) with delays $\tau_{ij}$ that depend on the distance between two oscillators $i$ and $j$:
\begin{equation}
\dot{\theta}_i(t) = \omega_i + \epsilon \sum_{j=1}^{N} A_{ij} \sin\Big( \theta_j(t - \tau_{ij}) - \theta_i(t) \Big)\,.
\label{eq:original_km_delay}
\end{equation}
The delay operator approach we introduce here generalizes to arbitrary adjacency matrices. In order to demonstrate this approach, we start by considering an undirected ring graph $\mathfrak{G}_{RG}$, where $N=100$ nodes are arranged on a one-dimensional ring with periodic boundary conditions. Each node in $\mathfrak{G}_{RG}$ is connected to the $k = 25$ nearest neighbors in each direction, and  $A_{ij} \in \{0,1\}$ is $1$ if oscillators $i$ and $j$ are connected, and $0$ otherwise. The time delay $\tau_{ij} = d_{ij} / \nu$ between two nodes $i$ and $j$ grows linearly with distance ($d_{ij}$) with respect to the periodic boundary conditions on the ring ($d_{ij} = \min( |i-j|, N - |i-j| )$). For the parameters chosen in this work, the time delays range from approximately $2$ to $62$ ms, a timescale relevant to neural dynamics \cite{cabral2011role,choi2019synchronization, petkoski2022normalizing}. We consider the case where all oscillators have the same frequency of $10$ Hz ($\omega = 20\pi$)\revision{; however,} our approach can be applied to the case of non-identical natural frequencies \revision{\cite{fig_s1}}.

The time-delay term $\theta_{j}(t-\tau_{ij})$ can be approximated by $\theta_{j}(t)-\omega \tau_{ij}$ \cite{ko2007effects,jeong2002time,breakspear2010generative}. Using this approximation, in combination with the algebraic approach to the Kuramoto dynamics \cite{muller2021algebraic,budzinski2022geometry}, we introduce a delay operator, which provides analytical insight into how heterogeneous time delays can create specific, sophisticated spatiotemporal structures in the resulting nonlinear dynamics. Applying this approximation to Eq.~(\ref{eq:original_km_delay}), we arrive at an equation that captures the time-delay dynamics in the dKM in heterogeneous phase lags \cite{ko2007effects,jeong2002time,breakspear2010generative}. \revision{We can then} use our algebraic approach to the Kuramoto dynamics and arrive at \revision{(see Appendix --  Sec. \ref{sec:complex_valued_approach} -- for details)}
\begin{equation}
\bm{x}(t) = e^{\i \omega t} e^{t \bm{W}} \bm{x}(0)\,,
\label{eq:analytical_solution}
\end{equation}
where $\bm{x} \in \mathbb{C}^{N}$, and the matrix $\bm{W}$ is given by
\begin{equation}
\bm{W} = \epsilon e^{-\i\bm{\eta}} \circ \bm{A}\,,
\label{eq:matrix_delay}
\end{equation}
where $\circ$ represents the Hadamard (elementwise) product. This matrix has information about the coupling strength $\epsilon$, the time delays $\bm{\eta} = \omega \bm{\tau}$ present in the original dKM, and the connection scheme of the system on $\bm{A}$. In previous work, we have shown this complex-valued equation, when evaluated through the procedure described below, precisely captures the trajectories of the original, nonlinear Kuramoto model \cite{budzinski2022geometry}. We now show that this approach generalizes to the case of heterogeneous time delays.

\revision{With this approach, we have} two dynamical systems:~the original, nonlinear KM and a complex-valued system with the explicit solution in Eq.~(\ref{eq:analytical_solution}) \revision{(details on the derivation can be found in \cite{budzinski2022geometry} and in the Appendix - Sec. \ref{sec:complex_valued_approach})}. In the complex-valued system, $\bm{x} \in \mathbb{C}^N$ has elements $x_i(t) \in \mathbb{C}$ whose argument we compare with the numerical solution of the original Kuramoto model with heterogeneous time delays (dKM) $\theta_i(t) \in \mathbb{R}$ (obtained by Euler integration of Eq.~(\ref{eq:original_km_delay}) with high temporal precision). \revision{That is,} $\mathrm{Arg}[x_{i}(t)]$ is compared with $\theta_{i}(t)$. When initialized with unit-modulus initial conditions $|x_i(0)| = 1~\text{for all}~i$, with arguments $\mathrm{Arg}[x_i(0)]$ that match the initial phases $\theta_i(0)$ in the original dKM, the trajectories in the original and complex-valued KM correspond for a non-trivial window of time \cite{muller2021algebraic}. \revision{As mentioned above,} in \cite{budzinski2022geometry} we found that iterating the explicit expression Eq.~(\ref{eq:analytical_solution}) in a specific manner produces trajectories in the complex-valued system that \textit{precisely} match those in the original, nonlinear Kuramoto model. Specifically, we can evaluate:
\begin{equation}
\bm{x}(t+\varsigma) = \Lambda \big[e^{\i \omega \varsigma} e^{\varsigma \bm{W}} \bm{x}(t)\big]\,,
\label{eq:lambda_operator_x}
\end{equation}
where $\varsigma$ is small but finite, $t \in [0, \varsigma, 2\varsigma, \cdots, n\varsigma]$, and $\Lambda$ represents an elementwise operator mapping the modulus of each state vector element $x_i(t)$ to unity. This approach represents an iterative analytical procedure, defined by the application of the linear matrix exponential and $\Lambda$. Note that Eq.~(\ref{eq:lambda_operator_x}) propagates the solution at discrete time intervals defined by $\varsigma$, Eq.~(\ref{eq:analytical_solution}) can be applied within intervals defined by $\varsigma$, and $\varsigma > dt$. Critically, while this iterative procedure does not represent a closed-form, all-time solution for the dynamics of the original nonlinear Kuramoto system, all evolution of the arguments $\mathrm{Arg}[x_i]$ (which, again, correspond with $\theta_i(t)~\forall~i$ in the original KM) is governed under the linear matrix exponential operator, and it is clear that the elementwise $\Lambda$ operator only changes the moduli. In this work, we show that this approach applies also in the case of heterogeneous time delays and provides analytical insight into how distance-dependent time delays create specific spatiotemporal patterns.

\section{Results}

We first study phase synchronization in networks with (dKM) and without (KM) time-delays on $\mathfrak{G}_{RG}$, as a function of the coupling strength $\epsilon$ \revision{(Fig. \ref{fig:kuramoto_order_parameter_eps})}. We use the Kuramoto order parameter:
\revision{
\begin{equation}
    R(t) = \frac{1}{N}\left|\sum_{j=1}^N e^{\i\theta_j(t)}\right|,
\end{equation}}
%
and its time average $\langle R \rangle$ for 10-second simulations \revision{to measure the level of phase synchronization}. As \revision{the coupling strength} $\epsilon$ increases in the non-delayed case (original KM and complex-valued KM), $\langle R \rangle$ begins at a low value and increases until approaching unity (representing phase synchronization).
\begin{figure}[hbt]
    \centering
    \includegraphics[width=0.95\columnwidth]{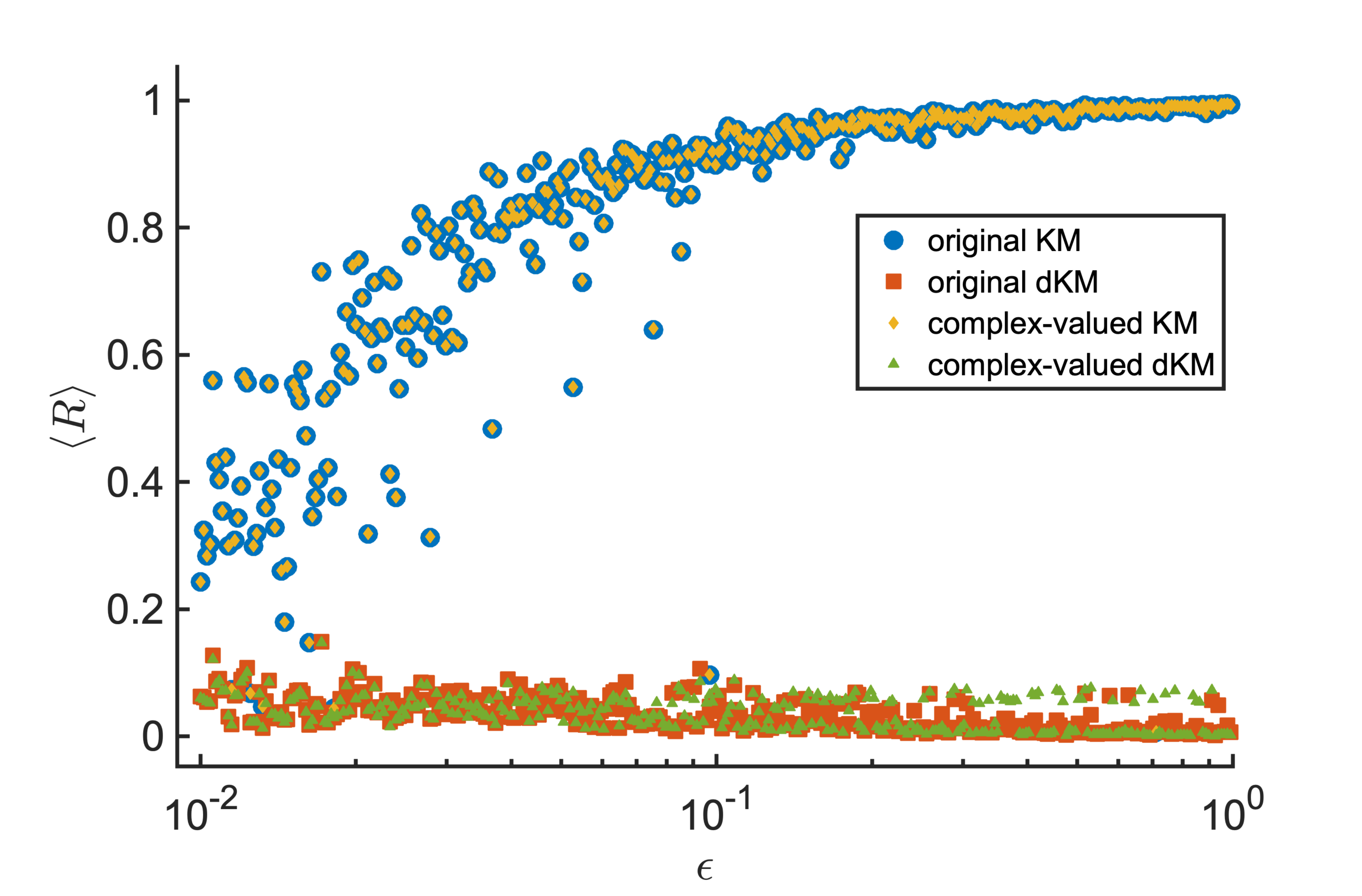}
    \caption{\textbf{Synchronization level for non-delayed and delayed networks.} The time-average Kuramoto order parameter $\langle R \rangle$ is plotted as a function of the coupling strength $\epsilon$ for the non-delayed case (blue dots: original KM, orange diamonds: complex-valued KM) and for the delayed case (red squares: original dKM, green triangles: complex-valued dKM). Each dot represents one 10-second simulation with random initial conditions ($\mathcal{U}(-\pi,\pi)$), which are the same for the complex-valued case and for the numerical simulation at each point.}
    \label{fig:kuramoto_order_parameter_eps}
\end{figure}
\begin{figure*}[htb]
    \centering
    \includegraphics[width=1.0\textwidth]{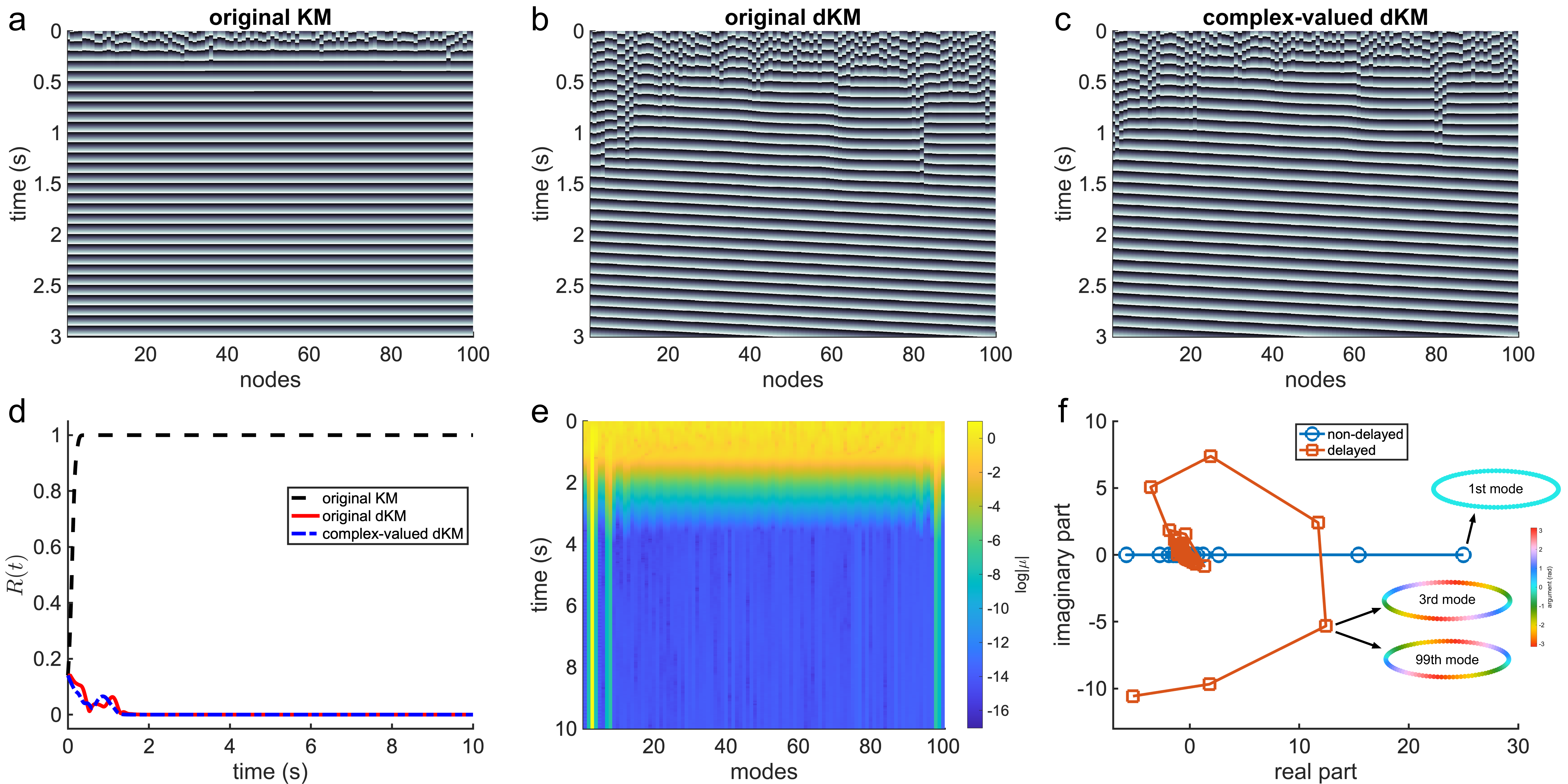}
    \caption{\textbf{Analytical and geometric view on the effect of time delays.} The spatiotemporal dynamics of the system is represented in color-code, where the phase of each oscillator is plotted as a function of time for \textbf{(a)} the original KM, \textbf{(b)} the original dKM, \textbf{(c)} and the complex-valued dKM. Dark colors represent phases close to $-\pi$ and light colors phases close to $\pi$. \textbf{(a)} Without delay, the network transitions to phase synchronization, which is represented by the horizontal lines. The effect of the delay, however, induces wave patterns in the system, whose dynamics are represented \textbf{(b)} in the original dKM and also captured \textbf{(c)} by the complex-valued model. \textbf{(d)} These dynamical characteristics are corroborated by the Kuramoto order parameter $R(t)$. \textbf{(e)} The eigenmodes offer a geometric perspective to such dynamics, where the waves are represented by a single eigenmode contribution ($3^{\mathrm{rd}}$ mode in this case). \textbf{(f)} The eigenvalues of $\bm{W}$ (delayed) and $\epsilon \bm{A}$ (non-delayed) provide further analytical insights into the effect of the delay in the system: it rotates the eigenvalues in the complex plane, which allows the system to access different modes. In the non-delayed case, the leading eigenvalue (in real part) is associated with an eigenvector with a zero phase difference configuration ($1^{\mathrm{st}}$ mode). In the delayed case, otherwise, there are two leading eigenvalues that are associated to the \revision{eigenvectors $\bm{v}_3$ and $\bm{v}_{99}$, which have phase configurations representing traveling waves}.}
    \label{fig:general_scenario}
\end{figure*}

In the case with heterogeneous time delay (original dKM and complex-valued dKM), the order parameter remains low (red squares and green triangles), reflecting the fact that time delays induce a range of spatiotemporal patterns, as observed previously \cite{yeung1999time, jeong2002time, ko2007effects, laing2016travelling, timms2014synchronization, ermentrout2009delays, ko2004wave}. Here, we observe that the complex-valued model is able to capture the average dynamics that the original Kuramoto model depicts, for both the non-delayed and delayed cases, for different coupling strength across different initial conditions \revision{(Fig. \ref{fig:kuramoto_order_parameter_eps})}.

We next study dynamics in the KM and dKM considering an individual realization, for a fixed coupling strength ($\epsilon = 0.5$), and compare the dynamics of the original dKM to the evaluation of the complex-valued approach. Without time delays, the original KM exhibits a quick transition from random initial conditions to a phase synchronized state (horizontal lines, Fig.~\ref{fig:general_scenario}a). With time delays, however, phase synchronization is not reached, and the original dKM exhibits a transition from random initial conditions to a traveling wave state (diagonal structures, Fig.~\ref{fig:general_scenario}b). The evaluation of the complex-valued dKM captures both the transient dynamics and traveling wave state exhibited in the original dKM (Fig.~\ref{fig:general_scenario}c), as well as the dynamics of the Kuramoto order parameter $R(t)$ (Fig.~\ref{fig:general_scenario}d).

Our approach to systems with heterogeneous time delay provides insight into the mechanism for these dynamics in terms of the spectrum of $\bm{W}$ \revision{-- Eq. (\ref{eq:matrix_delay})}. If $\bm{A}$ and $\bm{\tau}$ are circulant, $\bm{W}$ is also circulant \revision{(see Appendix -- Sec. \ref{sec:hadamard_product})}, hence $\bm{W}$ and $\bm{A}$ share the same eigenvectors (which form an orthonormal basis). We can then write Eq.~(\ref{eq:analytical_solution}) using the eigenspectrum of $\bm{W}$, which results in $\bm{x}(t) = e^{\i \omega t} ( \alpha_{1}e^{\lambda_{1}t}\bm{v}_{1} + \cdots + \alpha_{N}e^{\lambda_{N}t}\bm{v}_{N} )$, where $\alpha_{i}$ can be written in terms of initial conditions. Importantly, we can also write Eq.~(\ref{eq:lambda_operator_x}) in a similar fashion, which results in $\bm{x}(t+\varsigma) = \Lambda \big[ e^{\i \omega \varsigma} ( \alpha_{1}e^{\lambda_{1}\varsigma}\bm{v}_{1} + \cdots + \alpha_{N}e^{\lambda_{N}\varsigma}\bm{v}_{N} ) \big]$, where $\alpha_i$ can again be written in terms of the state of the system at time $t \in [0, \varsigma, 2\varsigma, \cdots, n\varsigma]$. Thus, while it is in general a very difficult problem to understand the dynamics of nonlinear networks in terms of eigenspectra, this approach provides a unique insight into the connection between the spectrum of $\bm{W}$ \revision{-- Eq.~(\ref{eq:matrix_delay}) --} and the spatiotemporal dynamics of the nonlinear oscillator network \revision{-- Eq. (\ref{eq:original_km_delay})}. Critically, our approach uses familiar mathematical techniques from linear algebra matrix theory in a distinct way:~while previous approaches in nonlinear dynamics have sought to describe the dynamics using the spectrum of the Laplacian matrix \cite{grabow2010small,sorrentino2016complete,sugitani2021synchronizing}, the focus on the complex-valued system in our approach enables the insight that the \textit{argument} of the eigenvectors of the matrix $\bm{W}$ provides analytical predictions about the resulting nonlinear dynamics.

Following this idea, Fig. \ref{fig:general_scenario}e shows the eigenmode contributions, here represented by $\log{|\mu_i|}$, as a function of time, for the dynamics in Fig.~\ref{fig:general_scenario}c. Here, the eigenmode contributions are given by the projection of the complex-valued approach solution $\bm{x}(t)$ onto the eigenvectors of $\bm{W}$. \revision{The eigenmode contributions are obtained as} $\mu_{k}(t) = \langle \bm{x}(t), \bm{v}_{k} \rangle$, where $\langle . \rangle$ denotes the standard \revision{complex} inner product. Figure \ref{fig:general_scenario}e shows that, when the network exhibits incoherent dynamics, the eigenmode contributions remain uniform across $\mu_i$. When the traveling wave pattern is reached, on the other hand, the $3^{\mathrm{rd}}$ eigenmode becomes dominant (note the log scale). These results demonstrate that the change from incoherent dynamics to a traveling wave can be understood quite directly through the geometry of the eigenmodes. Further, in the case of circulant networks, we can evaluate eigenvalues and eigenvectors analytically using the circulant diagonalization theorem \revision{(CDT) \cite{davis1979}}\revision{, and in this case,} the $1^{\mathrm{st}}$ eigenvector represents the solution where all oscillators have the same phase (phase synchronization), and higher modes represent wave patterns, given by Fourier modes \revision{(see Appendix  - Sec. \ref{sec:cdt})}.

The effect of heterogeneous time delays on the dynamics of the dKM can be understood through the geometry of eigenvalues in the complex plane. Figure \ref{fig:general_scenario}f illustrates the eigenvalues of $\epsilon \bm{A}$ (non-delayed) and $\bm{W}$ (delayed). While the non-delayed case (blue line and dots) has purely real eigenvalues, the effect of the heterogeneous time delays (red line and squares) can be understood in our framework in terms of the Hadamard (elementwise) product of the delay operator $\bm{\tau}$ and $\bm{A}$ (see Eq.~(\ref{eq:matrix_delay}), and \revision{Appendix - Sec. \ref{sec:hadamard_product}}). The effect of this operation is to provide a specific rotation of the eigenvalues in the complex plane. This rotation allows the system to access higher modes and, therefore, to exhibit different traveling wave patterns. Further, the rotation is not the same for all eigenvalues because the delays are heterogeneous. In this particular case, the rotation leads to eigenvalues associated to the $3^{\mathrm{rd}}$ and $99^{\mathrm{th}}$ modes to have the largest real part, \revision{allowing} the system \revision{to} reach traveling wave \revision{states} associated with the $3^{\mathrm{rd}}$ and $99^{\mathrm{th}}$ modes. In the particular example of Fig. \ref{fig:general_scenario}, the network evolves to a wave given by the $3^{\mathrm{rd}}$ mode, but different (random) initial conditions can either evolve to the dynamics described by the $3^{\mathrm{rd}}$ or $99^{\mathrm{th}}$ mode \revision{\cite{fig_s2}}. Moreover, when different time delays are considered, different modes can be dominant, and therefore the system evolves to a different wave pattern \revision{\cite{fig_s3}}.

\begin{figure}[htb]
    \centering
    \includegraphics[width = \columnwidth]{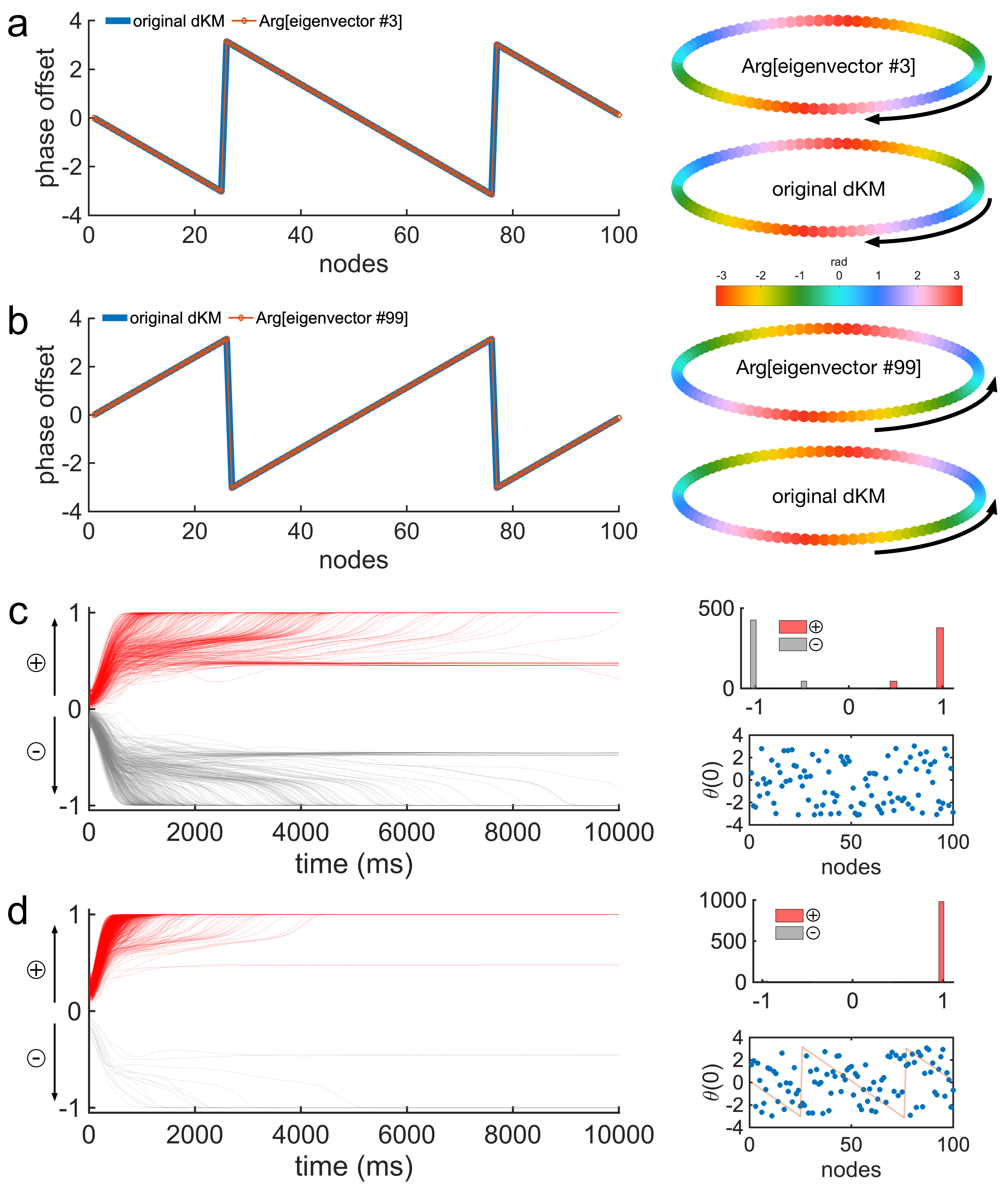}
    \caption{\textbf{Analytical predictions of specific wave patterns.} \textbf{(a)} The phase configuration for the original dKM (blue line) matches the argument (elementwise) of the $3^{\mathrm{rd}}$ eigenvector $\mathrm{Arg}[\bm{v}_{3}]$ - analytical prediction - (red dotted line). A representation on the circle using a color-code reveals the wave pattern (right). \textbf{(b)} Different initial conditions lead to the wave pattern that matches the argument of the $99^{\mathrm{th}}$ eigenvector $\mathrm{Arg}[\bm{v}_{99}]$. These waves can propagate either counterclockwise (negative) or clockwise (positive). \textbf{(c)} With random initial conditions, due to the dominance of two eigenvalues ($3^{\mathrm{rd}}$ and $99^{\mathrm{th}}$), the system exhibits waves propagating in both directions - with approximately half of the initial conditions evolving to each direction (top right). \textbf{(d)} With biased initial conditions, starting from ${\mathrm{Arg}[\bm{v}_{3}]}$ (red line, bottom right) and adding uniform random phases $0.8(\mathcal{U}(-\pi,\pi))$, we obtain a preferred direction of propagation.}
    \label{fig:eigenvector_dynamics}
\end{figure}
We can now uncover how the combination of network structure, time delays, and node state can create specific spatiotemporal patterns. By using our delay operator approach, we can analytically predict the specific pattern to which the original dKM evolves. Figure \ref{fig:eigenvector_dynamics}a shows the wave pattern given by $\theta$ obtained from the original dKM (blue line) and the argument (elementwise) of the $3^{\mathrm{rd}}$ eigenvector (red line), which predicts the observed dynamics \cite{movie_s1}. In this case, phases increase in the clockwise direction around the ring, which we define to be the positive direction ($+1$). It is important to note that, in our approach, the argument of each eigenvector element $(\mathrm{Arg}[(\bm{v}_k)_i] \, \forall i \in [1,N])$ directly relates with the phase offset in the resulting network dynamics. Because of the correspondence between trajectories in the complex-valued model and the original dKM, this approach creates a direct link between eigenvectors of the adjacency matrix and the specific spatiotemporal dynamics that result. For the dynamics in Fig.\,\ref{fig:eigenvector_dynamics}a, the eigenmode contribution is given by $\mu_{3}$ (see Fig. \ref{fig:general_scenario}e), and the phase configuration matches the argument of $\bm{v}_3$. In the example considered here, two eigenvalues are dominant (i.e.\,having the largest real part) - $\lambda_3$ and $\lambda_{99}$ (Fig.\,\ref{fig:general_scenario}f). Different initial conditions can thus evolve to the phase pattern given by the $99^{\mathrm{th}}$ mode, which is predicted by $\bm{v}_{99}$ (Fig.\,\ref{fig:eigenvector_dynamics}b). In this case, the spatial frequency is the same as observed in the previous case, but the direction of the wave pattern is the opposite \cite{movie_s2}. These results show a clear connection between the spectrum of the network (described by $\bm{W}$) and the dynamics on the original dKM, where the wave pattern (solution) can be described by the phase configuration of the eigenvector associated to the dominant mode.

We take counterclockwise increases in phase to be in the negative direction, and clockwise increases to be positive. Because the network considered here has two dominant eigenvalues equal in their real parts, random initial conditions evolve equally either to the phase pattern of $\bm{v}_3$ or $\bm{v}_{99}$ in individual simulations (Fig.\,\ref{fig:eigenvector_dynamics}c). \revision{To quantify the spatiotemporal dynamics, the spatial frequency, and the direction of propagation, we compare the phases obtained from the original dKM and the argument of the eigenvectors of $\bm{W}$. Specifically, we evaluate:
 \begin{equation}
     \rho^{(k)}(t) = \left| \frac{1}{N} \sum_{i}^{N} e^{\i \theta_{i}(t)} e^{-\i \mathrm{Arg}[(\bm{v}_{k})_{i}]} \right|,
\end{equation}
where $\theta_{i}(t)$ is the phase of the oscillators $i$ at time $t$ obtained from the original dKM, $N$ is the number of oscillators in the network, $\i$ is the imaginary unit, and $\bm{v}_{k}$ is the $k^{\mathrm{th}}$ eigenvector of $\bm{W}$. Here, we use $\bm{v}_{3}$, and $\rho^{(k)} = 1$ means the phase configuration of the network given by the $\bm{\theta}(t)$ is the same as the one given by the argument of the eigenvector $\bm{v}_{k}$.} In the case shown in Fig. \ref{fig:eigenvector_dynamics}c, approximately half of the simulations evolve to the positive direction\revision{, indicating the dynamics matches the argument of $\bm{v}_{3}$}, and approximately half evolve to the negative\revision{, indicating the dynamics is given by the argument of $\bm{v}_{99}$}. A small fraction of initial conditions exhibit inner products approximately $\pm 0.5$, corresponding to a wave with a different spatial frequency.

Using the insights from this approach, we can now design initial conditions that generate waves in a preferred direction. To do this, we started from the phase pattern specified by $\bm{v}_3$ and randomized the phases by nearly a full cycle ($0.8~\mathcal{U}[-\pi,\pi]$, then wrapped in $[-\pi,\,\pi]$). While this initial condition is nearly random (Fig.\,\ref{fig:eigenvector_dynamics}d, bottom right, where the red line represents ${\mathrm{Arg}[\bm{v}_{3}]}$; compare with Fig.\,\ref{fig:eigenvector_dynamics}c, bottom right), nearly all simulations evolve to the positive direction. These results demonstrate that the combination of connectivity, time delays, and network state can generate specific spatiotemporal patterns in oscillator networks -- here, traveling waves with a chirality in a preferred direction.

\revision{The framework for systems with heterogeneous time delays introduced in the work generalizes to many types of networks. This approach can be applied to very sophisticated networks obtained from experimental data. In particular, this approach can successfully predict traveling wave patterns arising in an oscillator network based on connectivity in the human brain. Figure \ref{fig:hcp_network} illustrates simulations and the analytical prediction resulting from our approach for networks where the connectivity data is based on the Human Connectome Project (HCP) \cite{hagmann2008mapping}. In this case, $N = 998$ cortical regions are given at a point in 3-space, with connections between areas derived from neuroimaging data. Connection weights between regions are determined by the number of fibers \cite{hagmann2008mapping,muller2014brain}, which we use to build the adjacency matrix $\bm{A}$. Here, the coupling strength is scaled with $\epsilon = 200$, and the initial conditions for each analysis \revision{are} given by random phases $[-\pi, \pi]$. Further, time delays are \revision{obtained} by $\tau_{ij} = d_{ij} / \nu$, where the distances $d_{ij}$ are determined by the average length of these fibers, and the known axonal conduction speed is given by $\nu = 5$ m/s \cite{swadlow2012axonal}. The dynamics of each node is represented by the Kuramoto model, given either by Eq. (\ref{eq:original_km_no_delay}) in the nondelayed case and by Eq. (\ref{eq:original_km_delay}) in the delayed case. The natural frequency of each oscillator is given by $10$ Hz (simulating, for example, a specific drive from the thalamus). \revision{Using the} delay operator, we construct the matrix $\bm{W}$ for these systems -- Eq. (\ref{eq:matrix_delay}) -- which allows us to obtain analytical predictions of the spatiotemporal patterns that emerge.  
\begin{figure*}[htb]
    \centering
    \includegraphics[width=0.95\textwidth]{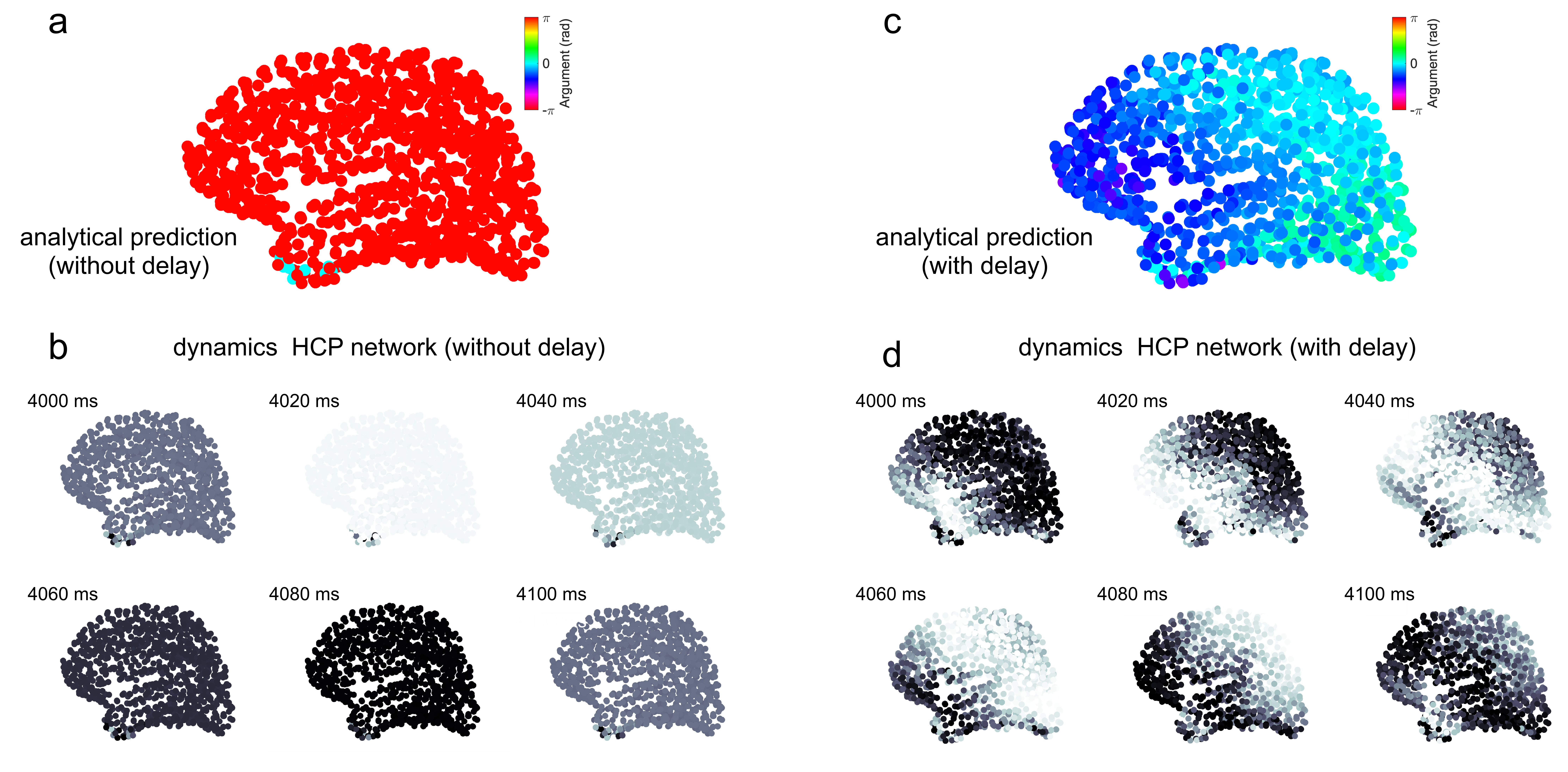}
    \caption{\revision{\textbf{Analytical predictions of spatiotemporal patterns in brain networks.} We use our approach to investigate networks based on the \textit{Human Connectome Project} (HCP) \cite{hagmann2008mapping}. We consider the case without delay in the coupling between nodes and also the case with distance dependent delays (heterogeneous delay). We use our delay operator to create the matrix $\bm{W}$, which allows analytical predictions of the dynamics. We show the phase of each node, given by the Kuramoto model, using a color-code (dark colors are values close to $-\pi$, and light colors are values close to $\pi$). \textbf{(a)} In the case without delay, the argument of the leading eigenvector depicts zero phase difference, which predicts phase synchronization. \textbf{(b)} We then study the numerical simulation for the network without delay, given by Eq. (\ref{eq:original_km_no_delay}), which shows a phase synchronized state. \textbf{(c)} In the case with heterogeneous time delays, the argument of the leading eigenvector shows a phase offset from bottom left to the top right (in the projection), which predicts a wave pattern. \textbf{(d)} We perform the numerical simulation with the delayed Kuramoto model, given by Eq. (\ref{eq:original_km_delay}), and the network depicts the wave pattern that is predicted by our approach. This shows that we are able to predict the dynamics observed in the simulations using our delay operator.}}
    \label{fig:hcp_network}
\end{figure*}

First, we consider the case without time delays, where $\tau_{ij} = 0$. We then obtain the eigenspectrum of the matrix $\bm{W}$ and plot the argument (elementwise) of the eigenvector associated with the leading eigenvalue (Fig. \ref{fig:hcp_network}a). In this case, this eigenvector shows a zero phase difference across nodes, \revision{predicting} phase synchronization. We then perform the numerical simulation of the Kuramoto model (without delay), given by Eq. (\ref{eq:original_km_no_delay}) and plot the phase of each node in color-code (Fig. \ref{fig:hcp_network}b), where we observe a phase synchronized behavior \cite{movie_s3}.

On the other hand, when we consider time delays in the interaction between cortical areas, the scenario is different. In this case, the argument (elementwise) of the eigenvector associated with the leading eigenvalue depicts a phase offset increasing from the bottom left to top right (in this projection), \revision{predicting} a wave propagating along that direction (Fig. \ref{fig:hcp_network}c). We then perform numerical simulations of the Kuramoto model with heterogeneous time delays -- Eq. (\ref{eq:original_km_delay}) and we observe the wave pattern that is predicted by our approach, as shown in Fig. \ref{fig:hcp_network}d \cite{movie_s4}.

This example now clearly demonstrates the advantage of this analytical approach:~when we numerically evaluate the eigenspectrum of $\bm{W}$ in this case, the leading eigenvector for the case without delays predicts phase synchrony, while the leading eigenvector for the case with delays predicts the {\it precise} wave pattern observed in the simulation. This result shows that our approach is able to predict the spatiotemporal pattern that results from connectivity and time delays in a highly relevant, real-world case.
}

\section{Conclusion}

In this work, we have introduced an analytical approach to the dynamics of nonlinear oscillator networks with heterogeneous time delays, an important open problem in physics with many potential applications. The advance in this work is based on an algebraic approach to the Kuramoto model introduced in \cite{budzinski2022geometry}. Importantly, the flexibility of this framework allowed us to introduce a delay operator, which provides rigorous analytical predictions for the specific traveling wave patterns induced by distance-dependent time delays. Using this approach, we can explain the effect of time delays in terms of a rotation of the eigenvalues of the matrix describing the system, which provides a clear and precise way to understand heterogeneous time delays in terms of the geometry of eigenmodes. Our approach therefore allows analytical predictions for the specific spatiotemporal patterns exhibited by the original dKM. 

\revision{This} framework allows us to understand how the combination of isotropic connectivity and time delays can produce traveling waves propagating in a preferred direction, as observed in experimental data \cite{muller2016rotating}. Importantly, while this question first arose in our study of neural dynamics in human cortex during sleep, the approach we have introduced here is general to networks of oscillators at finite scales. The results shown in this work, together with the results in \cite{budzinski2022geometry,muller2021algebraic}, represent a coherent and general framework for nonlinear oscillator networks. 

The central advance of this framework is to consider the dynamics in an individual simulation, taking into account both the initial conditions and the specific connectivity pattern in the network. This framework thus provides an opportunity to connect an individual adjacency matrix, for example a single network taken from experimental data or a single realization of a random graph model, to the specific spatiotemporal pattern that results in a simulation. This approach has important potential applications, for example in linking an experimentally reconstructed brain network to dynamics and computation in a neural system \cite{litvina2019brain} or in linking the connections in a large-scale power grid to potential large and transient disruptions \cite{witthaut2022collective,motter2013spontaneous}. In this work, we have generalized this framework to systems with heterogeneous time delays, which demonstrates the utility of this algebraic, operator-based approach to nonlinear dynamical systems at finite scales.

\begin{acknowledgments}
This work was supported by BrainsCAN at Western University through the Canada First Research Excellence Fund (CFREF), the NSF through a NeuroNex award (\#2015276), the Natural Sciences and Engineering Research Council of Canada (NSERC) grant R0370A01, ONR N00014-16-1-2829, NIH EB009282, NIH EB026899, the Swartz Foundation, SPIRITS 2020 of Kyoto University, Compute Ontario (computeontario.ca), Compute Canada (computecanada.ca), and the Western Academy for Advanced Research. J.M.~gratefully acknowledges the Western University Faculty of Science Distinguished Professorship in 2020-2021. R.C.B gratefully acknowledges the Western Institute for Neuroscience Clinical Research Postdoctoral Fellowship. G.B.B gratefully acknowledges the Canadian Open Neuroscience Platform (Graduate Scholarship), the Vector Institute (Postgraduate Affiliate), and NSERC (Canada Graduate Scholarship - Doctoral).
\end{acknowledgments}

\section*{Code availability}

An open-source code repository for this work is available on GitHub: \href{http://mullerlab.github.io}{\textcolor{Cerulean}{http://mullerlab.github.io}}.

\revision{
\section{Appendix}

\subsection{The complex-valued approach}\label{sec:complex_valued_approach}

We consider the Kuramoto model with heterogeneous time delays described by Eq. (\ref{eq:original_km_delay}) and then use the approximation given by  $\theta_{j}(t - \tau_{ij}) \approx \theta_{j}(t) - \omega \tau_{ij}$ \cite{ko2007effects,jeong2002time,breakspear2010generative}, which leads to
\begin{equation}
\dot{\theta}_i(t) = \omega + \epsilon \sum_{j=1}^{N} A_{ij} \sin \Big( \theta_j(t) - \theta_i(t) - \eta_{ij} \Big),
\label{eq:original_km_phase_lag}
\end{equation}
where $\eta_{ij} = \omega \tau_{ij}$.

Based on \cite{muller2021algebraic, budzinski2022geometry}, we introduce the complex-valued approach to the Kuramoto model described by Eq. (\ref{eq:original_km_phase_lag}). To do that, we introduce a new dynamical system, described by the variable $\psi \in \mathbb{C}$: 
\begin{equation}
\begin{split}
\dot{\psi}_i(t) = \omega + \epsilon \sum_{j=1}^{N} A_{ij} \Big[ \sin\Big( \psi_j(t) - \psi_i(t) - \eta_{ij}\Big) \\
- \i \cos\Big( \psi_j(t) - \psi_i(t) -\eta_{ij}\Big) \Big]\,.
\end{split}
\end{equation}
Next, multiplying both sides by $\i$ and applying Euler's formula yields
\begin{equation}
\i\dot{\psi}_i(t) = \i\omega + \epsilon e^{-\i\psi_i(t)} \sum_{j=1}^{N} A_{ij} e^{\i\psi_j(t)} e^{-\i \eta_{ij}}.
\label{eq:complex_valued_1}
\end{equation}
We define $\bm{W}$ as:
\begin{equation}
\bm{W} = \epsilon e^{-\i\bm{\eta}} \circ \bm{A}\,,
\end{equation}
where $\circ$ represents the Hadamard product (or element-wise product), and $\eta_{ij} = \omega \tau_{ij}$. This results in the following matrix form of Eq. (\ref{eq:complex_valued_1}):
\begin{equation}
\bm{\dot{\psi}}(t) = \bm{\omega} + \frac{1}{\i}\,\text{diag}[ e^{-\i\bm{\psi}(t)} ]\,\bm{W} e^{\i\bm{\psi}(t)}\,,
\end{equation}
where we note explicitly that $\bm{\psi}=[\psi_1,\cdots,\psi_N]^T$, $\bm{\dot{\psi}}=[\dot{\psi}_1,\cdots,\dot{\psi}_N]^T$, and $\bm{\omega}=[\omega,\cdots,\omega]^T$. Furthermore, we can write the previous equation as:
\begin{equation}
\frac{d}{dt} e^{\i\bm{\psi}(t)} = \bigg( \text{diag}[ \i \bm{\omega} ] + \bm{W} \bigg) e^{\i\bm{\psi}(t)}\,.
\end{equation}
Lastly, letting $\bm{x}(t) = e^{\i\bm{\psi}(t)}$, we have
\begin{equation}\label{eqSolSupp}
\bm{\dot{x}}(t) = \bigg( \text{diag}[ \i \bm{\omega} ] + \bm{W} \bigg) \bm{x}(t)\,,
\end{equation}
whose general solution is
\begin{equation}
\bm{x}(t) = e^{\i \omega t} e^{t \bm{W}} \bm{x}(0)\,.
\end{equation}
In this work, the dynamics of the complex-valued approach is studied by considering the elementwise argument of $\bm{x}(t)$, i.e. $\mathrm{Arg}[x_{i}(t)]~\forall~i \in [1,N]$. As shown in \cite{budzinski2022geometry}, when $\frac{|x_j|}{|x_i|} \approx 1$, the dynamics of $\mathrm{Arg}[\bm{x}(t)]$ precisely matches the trajectories of the Kuramoto model given by Eq. (\ref{eq:original_km_phase_lag}). This allows us to use the eigenspectrum of $\bm{W}$ to understand and predict the dynamics of the Kuramoto model with heterogeneous time delays.

\subsection{Circulant networks and Hadamard product}\label{sec:hadamard_product}

The definition of the Hadamard product can be described as follows:
\begin{definition}
Let $\bm{A}, \bm{B}$ be two $n \times n$ matrices. The Hadamard product $\bm{A} \circ \bm{B}$ is a matrix of dimension $n \times n$ with elements given by 
\[ (\bm{A} \circ \bm{B})_{ij}= (\bm{A})_{ij} (\bm{B})_{ij} .\] 
\end{definition}

For a complex number $\lambda$, we also define $e^{\circ (\lambda \bm{A})}$ to be the matrix of dimension $n \times n$ with elements given by 
\[ (e^{\circ \lambda \bm{A}})_{ij}=e^{\lambda \bm{A}_{ij}} .\] 

We have the following observation. 
\begin{prop}\label{prop:circ}
Let $\bm{A}, \bm{B}$ be two circulant matrices. Then
\begin{enumerate}
    \item $\bm{A} \circ \bm{B}$ is a circulant matrix. 
    \item $e^{\lambda \circ \bm{A}}$ is a circulant matrix. 
    
\end{enumerate}
\end{prop}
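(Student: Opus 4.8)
The plan is to work directly from the entrywise definition of a circulant matrix, since both operations in the statement --- the Hadamard product and the elementwise exponential --- act entry by entry, and this is precisely the structure a circulant matrix is built to exploit. Recall that an $n \times n$ matrix $\bm{C}$ is circulant exactly when its entries are invariant under a simultaneous cyclic shift of the row and column indices, i.e. $C_{i+1,\,j+1} = C_{ij}$ with indices read modulo $n$; equivalently, there is a single function $f$ on $\{0,1,\dots,n-1\}$ with $C_{ij} = f\big((j-i)\bmod n\big)$. I would state this characterization up front as the working definition, since it is the one that meshes cleanly with entrywise operations. The alternative algebraic characterization --- that $\bm{C}$ is a polynomial in the cyclic shift matrix --- is awkward here, because the Hadamard product does not respect ordinary matrix multiplication and so interacts poorly with that description.

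For part (1), write $A_{ij} = f\big((j-i)\bmod n\big)$ and $B_{ij} = g\big((j-i)\bmod n\big)$, using the circulance of $\bm{A}$ and $\bm{B}$. Then by the definition of the Hadamard product, $(\bm{A}\circ\bm{B})_{ij} = A_{ij}B_{ij} = f\big((j-i)\bmod n\big)\,g\big((j-i)\bmod n\big)$, which depends on $i$ and $j$ only through $(j-i)\bmod n$. Hence $\bm{A}\circ\bm{B}$ satisfies the defining property with generating function $fg$, and is therefore circulant.

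For part (2), the argument is identical in spirit. Using the definition $\big(e^{\circ(\lambda\bm{A})}\big)_{ij} = e^{\lambda A_{ij}}$ together with the circulance of $\bm{A}$, we obtain $\big(e^{\circ(\lambda\bm{A})}\big)_{ij} = e^{\lambda f((j-i)\bmod n)}$, which is again a function of $(j-i)\bmod n$ alone. So the elementwise exponential is circulant with generating function $m \mapsto e^{\lambda f(m)}$. The same reasoning in fact shows that applying any fixed scalar function elementwise preserves circulance, and parts (1) and (2) are just the two instances of this needed in the main text.

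Honestly there is no substantive obstacle here: the result is essentially a one-line consequence of the fact that circulance is a statement about the dependence of the entries on $(j-i)\bmod n$, and both operations manifestly preserve that dependence. The only point worth handling carefully is the index bookkeeping modulo $n$ --- making sure the shift invariance $C_{i+1,\,j+1}=C_{ij}$ is stated with the wraparound convention so that the boundary rows and columns are treated correctly --- but this is routine and does not affect the logic of the argument.
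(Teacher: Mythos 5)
Your proof is correct and takes essentially the same approach as the paper's: the paper writes $\bm{A}=\text{circ}(\bm{a})$, $\bm{B}=\text{circ}(\bm{b})$ and observes that the Hadamard product and elementwise exponential simply act on the generating vectors, which is exactly your observation that the entries depend on $i,j$ only through $(j-i)\bmod n$. The difference is purely notational (generating vector versus generating function), so nothing further is needed.
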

\begin{proof}
Assume that $\bm{A}=\text{circ}(\bm{a}), \bm{B}=\text{circ}(\bm{b})$ with $\bm{a}=(a_1, a_2, \ldots, a_n)$ and $\bm{b}=(b_1, b_2, \ldots, b_n).$ Then we can see that 
\[ \bm{A} \circ \bm{B}= \text{circ}((a_1b_1, a_2b_2, \ldots, a_n b_n)) ,\]
and 
\[ e^{\circ \lambda \bm{A}}=\text{circ}((e^{\lambda a_1}, e^{\lambda a_2}, \ldots, e^{\lambda a_n})) .\] 
Therefore, we conclude that both $\bm{A} \circ \bm{B}$ and $e^{\lambda \circ \bm{A}}$ are circulant. 
\end{proof} 

\subsection{The Circulant Diagonalization Theorem}\label{sec:cdt}

In the case of circulant networks, we can use the circulant diagonalization theorem (CDT) to obtain the eigenspectrum of the adjacency matrix analytically \cite{davis1979}. In our work, both the non-delayed network $\epsilon \bm{A}$ and the delayed one $\bm{W}$ are circulant (see Prop. \ref{prop:circ}).

\begin{figure}[thb]
    \centering
    \includegraphics[width=0.9\columnwidth]{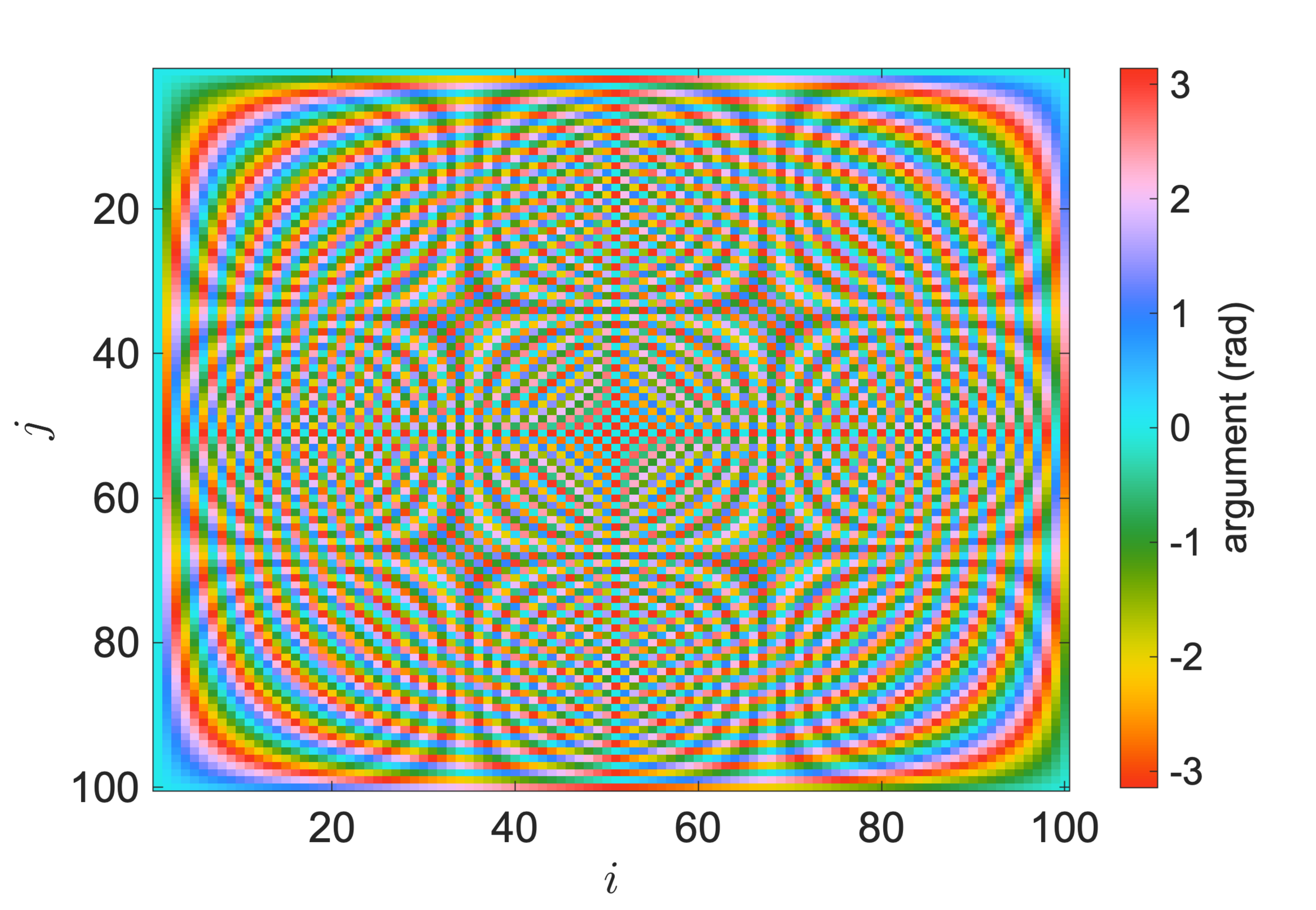}
    \caption{\revision{A graphical representation of the matrix with the phase configuration of the eigenvectors of $\bm{W}$. The $k^{\mathrm{th}}$ column is the color-coded argument (elementwise) of the $k^{\mathrm{th}}$ eigenvector.}}
    \label{fig:dft}
\end{figure}
The CDT states that all circulant matrices, say $\bm{H} = \text{circ}(\bm{h})$, where $\text{circ}(\bm{h})$ is the circulant matrix constructed from the generating vector $\bm{h}=(h_1, \cdots, h_N)$, are diagonalized by the same unitary matrix $\bm{U}$ with components
\begin{equation}
U_{ks} = \frac{1}{\sqrt{N}} \exp\left[ - \frac{2\pi \i}{N} (k-1)(s-1) \right] ,
\end{equation}
where $k,s \in [1,N]$ and that the $N$ eigenvalues are given by
\begin{equation}
E_k(\bm{H}) = \sum\limits_{j=1}^{N} h_j \exp\left[ - \frac{2\pi \i}{N} (k-1)(j-1) \right]\,.
\label{eq:eigenvalues_cdt}
\end{equation}
We let Eq. (\ref{eq:eigenvalues_cdt}) determine the ordering of the eigenvalues throughout this work. The argument of the eigenvectors associated with these eigenvalues correspond to the columns of the discrete Fourier transform (DFT) matrix, which range from low to high spatial frequencies.

Figure \ref{fig:dft} shows the argument of the eigenvectors in color-code. Here, $\mathrm{Arg}[(\bm{v}_1)_i] = 0~\forall~i \in [1,N]$ (as shown in Fig. \ref{fig:general_scenario}), which represents zero phase difference across oscillators, or phase synchronization. The other eigenvectors represent Fourier modes (waves) with different spatial frequencies. Figure \ref{fig:general_scenario} shows the cases of the eigenvectors $\bm{v}_{3}$ and $\bm{v}_{99}$.

} 


\begin{thebibliography}{49}%
\makeatletter
\providecommand \@ifxundefined [1]{%
 \@ifx{#1\undefined}
}%
\providecommand \@ifnum [1]{%
 \ifnum #1\expandafter \@firstoftwo
 \else \expandafter \@secondoftwo
 \fi
}%
\providecommand \@ifx [1]{%
 \ifx #1\expandafter \@firstoftwo
 \else \expandafter \@secondoftwo
 \fi
}%
\providecommand \natexlab [1]{#1}%
\providecommand \enquote  [1]{``#1''}%
\providecommand \bibnamefont  [1]{#1}%
\providecommand \bibfnamefont [1]{#1}%
\providecommand \citenamefont [1]{#1}%
\providecommand \href@noop [0]{\@secondoftwo}%
\providecommand \href [0]{\begingroup \@sanitize@url \@href}%
\providecommand \@href[1]{\@@startlink{#1}\@@href}%
\providecommand \@@href[1]{\endgroup#1\@@endlink}%
\providecommand \@sanitize@url [0]{\catcode `\\12\catcode `\$12\catcode
  `\&12\catcode `\#12\catcode `\^12\catcode `\_12\catcode `\%12\relax}%
\providecommand \@@startlink[1]{}%
\providecommand \@@endlink[0]{}%
\providecommand \url  [0]{\begingroup\@sanitize@url \@url }%
\providecommand \@url [1]{\endgroup\@href {#1}{\urlprefix }}%
\providecommand \urlprefix  [0]{URL }%
\providecommand \Eprint [0]{\href }%
\providecommand \doibase [0]{https://doi.org/}%
\providecommand \selectlanguage [0]{\@gobble}%
\providecommand \bibinfo  [0]{\@secondoftwo}%
\providecommand \bibfield  [0]{\@secondoftwo}%
\providecommand \translation [1]{[#1]}%
\providecommand \BibitemOpen [0]{}%
\providecommand \bibitemStop [0]{}%
\providecommand \bibitemNoStop [0]{.\EOS\space}%
\providecommand \EOS [0]{\spacefactor3000\relax}%
\providecommand \BibitemShut  [1]{\csname bibitem#1\endcsname}%
\let\auto@bib@innerbib\@empty
\bibitem [{\citenamefont {Muller}\ \emph {et~al.}(2016)\citenamefont {Muller},
  \citenamefont {Piantoni}, \citenamefont {Koller}, \citenamefont {Cash},
  \citenamefont {Halgren},\ and\ \citenamefont
  {Sejnowski}}]{muller2016rotating}%
  \BibitemOpen
  \bibfield  {author} {\bibinfo {author} {\bibfnamefont {L.}~\bibnamefont
  {Muller}}, \bibinfo {author} {\bibfnamefont {G.}~\bibnamefont {Piantoni}},
  \bibinfo {author} {\bibfnamefont {D.}~\bibnamefont {Koller}}, \bibinfo
  {author} {\bibfnamefont {S.~S.}\ \bibnamefont {Cash}}, \bibinfo {author}
  {\bibfnamefont {E.}~\bibnamefont {Halgren}},\ and\ \bibinfo {author}
  {\bibfnamefont {T.~J.}\ \bibnamefont {Sejnowski}},\ }\bibfield  {title}
  {\bibinfo {title} {Rotating waves during human sleep spindles organize global
  patterns of activity that repeat precisely through the night},\ }\href@noop
  {} {\bibfield  {journal} {\bibinfo  {journal} {Elife}\ }\textbf {\bibinfo
  {volume} {5}},\ \bibinfo {pages} {e17267} (\bibinfo {year}
  {2016})}\BibitemShut {NoStop}%
\bibitem [{\citenamefont {Sejnowski}\ and\ \citenamefont
  {Destexhe}(2000)}]{sejnowski2000we}%
  \BibitemOpen
  \bibfield  {author} {\bibinfo {author} {\bibfnamefont {T.~J.}\ \bibnamefont
  {Sejnowski}}\ and\ \bibinfo {author} {\bibfnamefont {A.}~\bibnamefont
  {Destexhe}},\ }\bibfield  {title} {\bibinfo {title} {Why do we sleep?},\
  }\href@noop {} {\bibfield  {journal} {\bibinfo  {journal} {Brain research}\
  }\textbf {\bibinfo {volume} {886}},\ \bibinfo {pages} {208} (\bibinfo {year}
  {2000})}\BibitemShut {NoStop}%
\bibitem [{\citenamefont {Girard}\ \emph {et~al.}(2001)\citenamefont {Girard},
  \citenamefont {Hup{\'e}},\ and\ \citenamefont
  {Bullier}}]{girard2001feedforward}%
  \BibitemOpen
  \bibfield  {author} {\bibinfo {author} {\bibfnamefont {P.}~\bibnamefont
  {Girard}}, \bibinfo {author} {\bibfnamefont {J.~M.}\ \bibnamefont
  {Hup{\'e}}},\ and\ \bibinfo {author} {\bibfnamefont {J.}~\bibnamefont
  {Bullier}},\ }\bibfield  {title} {\bibinfo {title} {Feedforward and feedback
  connections between areas v1 and v2 of the monkey have similar rapid
  conduction velocities},\ }\href@noop {} {\bibfield  {journal} {\bibinfo
  {journal} {Journal of Neurophysiology}\ }\textbf {\bibinfo {volume} {85}},\
  \bibinfo {pages} {1328} (\bibinfo {year} {2001})}\BibitemShut {NoStop}%
\bibitem [{\citenamefont {Muller}\ \emph {et~al.}(2021)\citenamefont {Muller},
  \citenamefont {Min\'a\ifmmode~\check{c}\else \v{c}\fi{}},\ and\ \citenamefont
  {Nguyen}}]{muller2021algebraic}%
  \BibitemOpen
  \bibfield  {author} {\bibinfo {author} {\bibfnamefont {L.}~\bibnamefont
  {Muller}}, \bibinfo {author} {\bibfnamefont {J.}~\bibnamefont
  {Min\'a\ifmmode~\check{c}\else \v{c}\fi{}}},\ and\ \bibinfo {author}
  {\bibfnamefont {T.~T.}\ \bibnamefont {Nguyen}},\ }\bibfield  {title}
  {\bibinfo {title} {Algebraic approach to the kuramoto model},\ }\href
  {https://doi.org/10.1103/PhysRevE.104.L022201} {\bibfield  {journal}
  {\bibinfo  {journal} {Physical Review E}\ }\textbf {\bibinfo {volume}
  {104}},\ \bibinfo {pages} {L022201} (\bibinfo {year} {2021})}\BibitemShut
  {NoStop}%
\bibitem [{\citenamefont {Atay}(2010)}]{atay2010complex}%
  \BibitemOpen
  \bibfield  {author} {\bibinfo {author} {\bibfnamefont {F.~M.}\ \bibnamefont
  {Atay}},\ }\href@noop {} {\emph {\bibinfo {title} {Complex time-delay
  systems: theory and applications}}}\ (\bibinfo  {publisher} {Springer},\
  \bibinfo {year} {2010})\BibitemShut {NoStop}%
\bibitem [{\citenamefont {Schr{\"o}ter}\ \emph {et~al.}(2017)\citenamefont
  {Schr{\"o}ter}, \citenamefont {Paulsen},\ and\ \citenamefont
  {Bullmore}}]{schroter2017micro}%
  \BibitemOpen
  \bibfield  {author} {\bibinfo {author} {\bibfnamefont {M.}~\bibnamefont
  {Schr{\"o}ter}}, \bibinfo {author} {\bibfnamefont {O.}~\bibnamefont
  {Paulsen}},\ and\ \bibinfo {author} {\bibfnamefont {E.~T.}\ \bibnamefont
  {Bullmore}},\ }\bibfield  {title} {\bibinfo {title} {Micro-connectomics:
  probing the organization of neuronal networks at the cellular scale},\
  }\href@noop {} {\bibfield  {journal} {\bibinfo  {journal} {Nature Reviews
  Neuroscience}\ }\textbf {\bibinfo {volume} {18}},\ \bibinfo {pages} {131}
  (\bibinfo {year} {2017})}\BibitemShut {NoStop}%
\bibitem [{\citenamefont {Papachristodoulou}\ \emph {et~al.}(2010)\citenamefont
  {Papachristodoulou}, \citenamefont {Jadbabaie},\ and\ \citenamefont
  {M{\"u}nz}}]{papachristodoulou2010effects}%
  \BibitemOpen
  \bibfield  {author} {\bibinfo {author} {\bibfnamefont {A.}~\bibnamefont
  {Papachristodoulou}}, \bibinfo {author} {\bibfnamefont {A.}~\bibnamefont
  {Jadbabaie}},\ and\ \bibinfo {author} {\bibfnamefont {U.}~\bibnamefont
  {M{\"u}nz}},\ }\bibfield  {title} {\bibinfo {title} {Effects of delay in
  multi-agent consensus and oscillator synchronization},\ }\href@noop {}
  {\bibfield  {journal} {\bibinfo  {journal} {IEEE transactions on automatic
  control}\ }\textbf {\bibinfo {volume} {55}},\ \bibinfo {pages} {1471}
  (\bibinfo {year} {2010})}\BibitemShut {NoStop}%
\bibitem [{\citenamefont {Liao}\ and\ \citenamefont
  {Wang}(2000)}]{liao2000global}%
  \BibitemOpen
  \bibfield  {author} {\bibinfo {author} {\bibfnamefont {T.}~\bibnamefont
  {Liao}}\ and\ \bibinfo {author} {\bibfnamefont {F.}~\bibnamefont {Wang}},\
  }\bibfield  {title} {\bibinfo {title} {Global stability for cellular neural
  networks with time delay},\ }\href@noop {} {\bibfield  {journal} {\bibinfo
  {journal} {IEEE Transactions on neural networks}\ }\textbf {\bibinfo {volume}
  {11}},\ \bibinfo {pages} {1481} (\bibinfo {year} {2000})}\BibitemShut
  {NoStop}%
\bibitem [{\citenamefont {Tewarie}\ \emph {et~al.}(2019)\citenamefont
  {Tewarie}, \citenamefont {Abeysuriya}, \citenamefont {Byrne}, \citenamefont
  {O'Neill}, \citenamefont {Sotiropoulos}, \citenamefont {Brookes},\ and\
  \citenamefont {Coombes}}]{tewarie2019spatially}%
  \BibitemOpen
  \bibfield  {author} {\bibinfo {author} {\bibfnamefont {P.}~\bibnamefont
  {Tewarie}}, \bibinfo {author} {\bibfnamefont {R.}~\bibnamefont {Abeysuriya}},
  \bibinfo {author} {\bibfnamefont {A.}~\bibnamefont {Byrne}}, \bibinfo
  {author} {\bibfnamefont {G.~C.}\ \bibnamefont {O'Neill}}, \bibinfo {author}
  {\bibfnamefont {S.~N.}\ \bibnamefont {Sotiropoulos}}, \bibinfo {author}
  {\bibfnamefont {M.~J.}\ \bibnamefont {Brookes}},\ and\ \bibinfo {author}
  {\bibfnamefont {S.}~\bibnamefont {Coombes}},\ }\bibfield  {title} {\bibinfo
  {title} {How do spatially distinct frequency specific meg networks emerge
  from one underlying structural connectome? the role of the structural
  eigenmodes},\ }\href@noop {} {\bibfield  {journal} {\bibinfo  {journal}
  {NeuroImage}\ }\textbf {\bibinfo {volume} {186}},\ \bibinfo {pages} {211}
  (\bibinfo {year} {2019})}\BibitemShut {NoStop}%
\bibitem [{\citenamefont {Petkoski}\ and\ \citenamefont
  {Jirsa}(2022)}]{petkoski2022normalizing}%
  \BibitemOpen
  \bibfield  {author} {\bibinfo {author} {\bibfnamefont {S.}~\bibnamefont
  {Petkoski}}\ and\ \bibinfo {author} {\bibfnamefont {V.~K.}\ \bibnamefont
  {Jirsa}},\ }\bibfield  {title} {\bibinfo {title} {Normalizing the brain
  connectome for communication through synchronization},\ }\href@noop {}
  {\bibfield  {journal} {\bibinfo  {journal} {Network Neuroscience}\ ,\
  \bibinfo {pages} {1}} (\bibinfo {year} {2022})}\BibitemShut {NoStop}%
\bibitem [{\citenamefont {Lee}\ \emph {et~al.}(2009)\citenamefont {Lee},
  \citenamefont {Ott},\ and\ \citenamefont {Antonsen}}]{lee2009large}%
  \BibitemOpen
  \bibfield  {author} {\bibinfo {author} {\bibfnamefont {W.~S.}\ \bibnamefont
  {Lee}}, \bibinfo {author} {\bibfnamefont {E.}~\bibnamefont {Ott}},\ and\
  \bibinfo {author} {\bibfnamefont {T.~M.}\ \bibnamefont {Antonsen}},\
  }\bibfield  {title} {\bibinfo {title} {Large coupled oscillator systems with
  heterogeneous interaction delays},\ }\href@noop {} {\bibfield  {journal}
  {\bibinfo  {journal} {Physical Review Letters}\ }\textbf {\bibinfo {volume}
  {103}},\ \bibinfo {pages} {044101} (\bibinfo {year} {2009})}\BibitemShut
  {NoStop}%
\bibitem [{\citenamefont {Papachristodoulou}\ and\ \citenamefont
  {Jadbabaie}(2006)}]{papachristodoulou2006synchonization}%
  \BibitemOpen
  \bibfield  {author} {\bibinfo {author} {\bibfnamefont {A.}~\bibnamefont
  {Papachristodoulou}}\ and\ \bibinfo {author} {\bibfnamefont {A.}~\bibnamefont
  {Jadbabaie}},\ }\bibfield  {title} {\bibinfo {title} {Synchonization in
  oscillator networks with heterogeneous delays, switching topologies and
  nonlinear dynamics},\ }in\ \href@noop {} {\emph {\bibinfo {booktitle}
  {Proceedings of the 45th IEEE Conference on Decision and Control}}}\
  (\bibinfo {organization} {IEEE},\ \bibinfo {year} {2006})\ pp.\ \bibinfo
  {pages} {4307--4312}\BibitemShut {NoStop}%
\bibitem [{\citenamefont {Roberts}\ \emph {et~al.}(2019)\citenamefont
  {Roberts}, \citenamefont {Gollo}, \citenamefont {Abeysuriya}, \citenamefont
  {Roberts}, \citenamefont {Mitchell}, \citenamefont {Woolrich},\ and\
  \citenamefont {Breakspear}}]{roberts2019metastable}%
  \BibitemOpen
  \bibfield  {author} {\bibinfo {author} {\bibfnamefont {J.~A.}\ \bibnamefont
  {Roberts}}, \bibinfo {author} {\bibfnamefont {L.~L.}\ \bibnamefont {Gollo}},
  \bibinfo {author} {\bibfnamefont {R.~G.}\ \bibnamefont {Abeysuriya}},
  \bibinfo {author} {\bibfnamefont {G.}~\bibnamefont {Roberts}}, \bibinfo
  {author} {\bibfnamefont {P.~B.}\ \bibnamefont {Mitchell}}, \bibinfo {author}
  {\bibfnamefont {M.~W.}\ \bibnamefont {Woolrich}},\ and\ \bibinfo {author}
  {\bibfnamefont {M.}~\bibnamefont {Breakspear}},\ }\bibfield  {title}
  {\bibinfo {title} {Metastable brain waves},\ }\href@noop {} {\bibfield
  {journal} {\bibinfo  {journal} {Nature Communications}\ }\textbf {\bibinfo
  {volume} {10}},\ \bibinfo {pages} {1} (\bibinfo {year} {2019})}\BibitemShut
  {NoStop}%
\bibitem [{\citenamefont {Kuramoto}(1975)}]{kuramoto1975self}%
  \BibitemOpen
  \bibfield  {author} {\bibinfo {author} {\bibfnamefont {Y.}~\bibnamefont
  {Kuramoto}},\ }\bibfield  {title} {\bibinfo {title} {Self-entrainment of a
  population of coupled non-linear oscillators},\ }in\ \href@noop {} {\emph
  {\bibinfo {booktitle} {International symposium on mathematical problems in
  theoretical physics}}}\ (\bibinfo {organization} {Springer},\ \bibinfo {year}
  {1975})\ pp.\ \bibinfo {pages} {420--422}\BibitemShut {NoStop}%
\bibitem [{\citenamefont {Rodrigues}\ \emph {et~al.}(2016)\citenamefont
  {Rodrigues}, \citenamefont {Peron}, \citenamefont {Ji},\ and\ \citenamefont
  {Kurths}}]{rodrigues2016kuramoto}%
  \BibitemOpen
  \bibfield  {author} {\bibinfo {author} {\bibfnamefont {F.~A.}\ \bibnamefont
  {Rodrigues}}, \bibinfo {author} {\bibfnamefont {T.~K. D.~M.}\ \bibnamefont
  {Peron}}, \bibinfo {author} {\bibfnamefont {P.}~\bibnamefont {Ji}},\ and\
  \bibinfo {author} {\bibfnamefont {J.}~\bibnamefont {Kurths}},\ }\bibfield
  {title} {\bibinfo {title} {The kuramoto model in complex networks},\
  }\href@noop {} {\bibfield  {journal} {\bibinfo  {journal} {Physics Reports}\
  }\textbf {\bibinfo {volume} {610}},\ \bibinfo {pages} {1} (\bibinfo {year}
  {2016})}\BibitemShut {NoStop}%
\bibitem [{\citenamefont {Acebr{\'o}n}\ \emph {et~al.}(2005)\citenamefont
  {Acebr{\'o}n}, \citenamefont {Bonilla}, \citenamefont {Vicente},
  \citenamefont {Ritort},\ and\ \citenamefont {Spigler}}]{acebron2005kuramoto}%
  \BibitemOpen
  \bibfield  {author} {\bibinfo {author} {\bibfnamefont {J.~A.}\ \bibnamefont
  {Acebr{\'o}n}}, \bibinfo {author} {\bibfnamefont {L.~L.}\ \bibnamefont
  {Bonilla}}, \bibinfo {author} {\bibfnamefont {C.~J.~P.}\ \bibnamefont
  {Vicente}}, \bibinfo {author} {\bibfnamefont {F.}~\bibnamefont {Ritort}},\
  and\ \bibinfo {author} {\bibfnamefont {R.}~\bibnamefont {Spigler}},\
  }\bibfield  {title} {\bibinfo {title} {The kuramoto model: A simple paradigm
  for synchronization phenomena},\ }\href@noop {} {\bibfield  {journal}
  {\bibinfo  {journal} {Reviews of Modern Physics}\ }\textbf {\bibinfo {volume}
  {77}},\ \bibinfo {pages} {137} (\bibinfo {year} {2005})}\BibitemShut
  {NoStop}%
\bibitem [{\citenamefont {Jeong}\ \emph {et~al.}(2002)\citenamefont {Jeong},
  \citenamefont {Ko},\ and\ \citenamefont {Moon}}]{jeong2002time}%
  \BibitemOpen
  \bibfield  {author} {\bibinfo {author} {\bibfnamefont {S.~O.}\ \bibnamefont
  {Jeong}}, \bibinfo {author} {\bibfnamefont {T.~W.}\ \bibnamefont {Ko}},\ and\
  \bibinfo {author} {\bibfnamefont {H.~T.}\ \bibnamefont {Moon}},\ }\bibfield
  {title} {\bibinfo {title} {Time-delayed spatial patterns in a two-dimensional
  array of coupled oscillators},\ }\href@noop {} {\bibfield  {journal}
  {\bibinfo  {journal} {Physical Review Letters}\ }\textbf {\bibinfo {volume}
  {89}},\ \bibinfo {pages} {154104} (\bibinfo {year} {2002})}\BibitemShut
  {NoStop}%
\bibitem [{\citenamefont {Ko}\ and\ \citenamefont
  {Ermentrout}(2007)}]{ko2007effects}%
  \BibitemOpen
  \bibfield  {author} {\bibinfo {author} {\bibfnamefont {T.~W.}\ \bibnamefont
  {Ko}}\ and\ \bibinfo {author} {\bibfnamefont {G.~B.}\ \bibnamefont
  {Ermentrout}},\ }\bibfield  {title} {\bibinfo {title} {Effects of axonal time
  delay on synchronization and wave formation in sparsely coupled neuronal
  oscillators},\ }\href@noop {} {\bibfield  {journal} {\bibinfo  {journal}
  {Physical Review E}\ }\textbf {\bibinfo {volume} {76}},\ \bibinfo {pages}
  {056206} (\bibinfo {year} {2007})}\BibitemShut {NoStop}%
\bibitem [{\citenamefont {Petkoski}\ \emph {et~al.}(2016)\citenamefont
  {Petkoski}, \citenamefont {Spiegler}, \citenamefont {Proix}, \citenamefont
  {Aram}, \citenamefont {Temprado},\ and\ \citenamefont
  {Jirsa}}]{petkoski2016heterogeneity}%
  \BibitemOpen
  \bibfield  {author} {\bibinfo {author} {\bibfnamefont {S.}~\bibnamefont
  {Petkoski}}, \bibinfo {author} {\bibfnamefont {A.}~\bibnamefont {Spiegler}},
  \bibinfo {author} {\bibfnamefont {T.}~\bibnamefont {Proix}}, \bibinfo
  {author} {\bibfnamefont {P.}~\bibnamefont {Aram}}, \bibinfo {author}
  {\bibfnamefont {J.}~\bibnamefont {Temprado}},\ and\ \bibinfo {author}
  {\bibfnamefont {V.~K.}\ \bibnamefont {Jirsa}},\ }\bibfield  {title} {\bibinfo
  {title} {Heterogeneity of time delays determines synchronization of coupled
  oscillators},\ }\href@noop {} {\bibfield  {journal} {\bibinfo  {journal}
  {Physical Review E}\ }\textbf {\bibinfo {volume} {94}},\ \bibinfo {pages}
  {012209} (\bibinfo {year} {2016})}\BibitemShut {NoStop}%
\bibitem [{\citenamefont {Strogatz}(2000)}]{strogatz2000kuramoto}%
  \BibitemOpen
  \bibfield  {author} {\bibinfo {author} {\bibfnamefont {S.~H.}\ \bibnamefont
  {Strogatz}},\ }\bibfield  {title} {\bibinfo {title} {From kuramoto to
  crawford: exploring the onset of synchronization in populations of coupled
  oscillators},\ }\href@noop {} {\bibfield  {journal} {\bibinfo  {journal}
  {Physica D: Nonlinear Phenomena}\ }\textbf {\bibinfo {volume} {143}},\
  \bibinfo {pages} {1} (\bibinfo {year} {2000})}\BibitemShut {NoStop}%
\bibitem [{\citenamefont {Arenas}\ \emph {et~al.}(2008)\citenamefont {Arenas},
  \citenamefont {Diaz-Guilera}, \citenamefont {Kurths}, \citenamefont
  {Moreno},\ and\ \citenamefont {Zhou}}]{arenas2008synchronization}%
  \BibitemOpen
  \bibfield  {author} {\bibinfo {author} {\bibfnamefont {A.}~\bibnamefont
  {Arenas}}, \bibinfo {author} {\bibfnamefont {A.}~\bibnamefont
  {Diaz-Guilera}}, \bibinfo {author} {\bibfnamefont {J.}~\bibnamefont
  {Kurths}}, \bibinfo {author} {\bibfnamefont {Y.}~\bibnamefont {Moreno}},\
  and\ \bibinfo {author} {\bibfnamefont {C.}~\bibnamefont {Zhou}},\ }\bibfield
  {title} {\bibinfo {title} {Synchronization in complex networks},\ }\href@noop
  {} {\bibfield  {journal} {\bibinfo  {journal} {Physics Reports}\ }\textbf
  {\bibinfo {volume} {469}},\ \bibinfo {pages} {93} (\bibinfo {year}
  {2008})}\BibitemShut {NoStop}%
\bibitem [{\citenamefont {Muller}\ \emph {et~al.}(2018)\citenamefont {Muller},
  \citenamefont {Chavane}, \citenamefont {Reynolds},\ and\ \citenamefont
  {Sejnowski}}]{muller2018cortical}%
  \BibitemOpen
  \bibfield  {author} {\bibinfo {author} {\bibfnamefont {L.}~\bibnamefont
  {Muller}}, \bibinfo {author} {\bibfnamefont {F.}~\bibnamefont {Chavane}},
  \bibinfo {author} {\bibfnamefont {J.}~\bibnamefont {Reynolds}},\ and\
  \bibinfo {author} {\bibfnamefont {T.~J.}\ \bibnamefont {Sejnowski}},\
  }\bibfield  {title} {\bibinfo {title} {Cortical travelling waves: mechanisms
  and computational principles},\ }\href@noop {} {\bibfield  {journal}
  {\bibinfo  {journal} {Nature Reviews Neuroscience}\ }\textbf {\bibinfo
  {volume} {19}},\ \bibinfo {pages} {255} (\bibinfo {year} {2018})}\BibitemShut
  {NoStop}%
\bibitem [{\citenamefont {Davis}\ \emph {et~al.}(2021)\citenamefont {Davis},
  \citenamefont {Benigno}, \citenamefont {Fletterman}, \citenamefont
  {Desbordes}, \citenamefont {Steward}, \citenamefont {Sejnowski},
  \citenamefont {Reynolds},\ and\ \citenamefont
  {Muller}}]{davis2021spontaneous}%
  \BibitemOpen
  \bibfield  {author} {\bibinfo {author} {\bibfnamefont {Z.~W.}\ \bibnamefont
  {Davis}}, \bibinfo {author} {\bibfnamefont {G.~B.}\ \bibnamefont {Benigno}},
  \bibinfo {author} {\bibfnamefont {C.}~\bibnamefont {Fletterman}}, \bibinfo
  {author} {\bibfnamefont {T.}~\bibnamefont {Desbordes}}, \bibinfo {author}
  {\bibfnamefont {C.}~\bibnamefont {Steward}}, \bibinfo {author} {\bibfnamefont
  {T.~J.}\ \bibnamefont {Sejnowski}}, \bibinfo {author} {\bibfnamefont
  {J.}~\bibnamefont {Reynolds}},\ and\ \bibinfo {author} {\bibfnamefont
  {L.}~\bibnamefont {Muller}},\ }\bibfield  {title} {\bibinfo {title}
  {Spontaneous traveling waves naturally emerge from horizontal fiber time
  delays and travel through locally asynchronous-irregular states},\
  }\href@noop {} {\bibfield  {journal} {\bibinfo  {journal} {Nature
  Communications}\ }\textbf {\bibinfo {volume} {12}},\ \bibinfo {pages} {1}
  (\bibinfo {year} {2021})}\BibitemShut {NoStop}%
\bibitem [{\citenamefont {Breakspear}\ \emph {et~al.}(2010)\citenamefont
  {Breakspear}, \citenamefont {Heitmann},\ and\ \citenamefont
  {Daffertshofer}}]{breakspear2010generative}%
  \BibitemOpen
  \bibfield  {author} {\bibinfo {author} {\bibfnamefont {M.}~\bibnamefont
  {Breakspear}}, \bibinfo {author} {\bibfnamefont {S.}~\bibnamefont
  {Heitmann}},\ and\ \bibinfo {author} {\bibfnamefont {A.}~\bibnamefont
  {Daffertshofer}},\ }\bibfield  {title} {\bibinfo {title} {Generative models
  of cortical oscillations: neurobiological implications of the kuramoto
  model},\ }\href@noop {} {\bibfield  {journal} {\bibinfo  {journal} {Frontiers
  in Human Neuroscience}\ }\textbf {\bibinfo {volume} {4}},\ \bibinfo {pages}
  {190} (\bibinfo {year} {2010})}\BibitemShut {NoStop}%
\bibitem [{\citenamefont {Cabral}\ \emph {et~al.}(2011)\citenamefont {Cabral},
  \citenamefont {Hugues}, \citenamefont {Sporns},\ and\ \citenamefont
  {Deco}}]{cabral2011role}%
  \BibitemOpen
  \bibfield  {author} {\bibinfo {author} {\bibfnamefont {J.}~\bibnamefont
  {Cabral}}, \bibinfo {author} {\bibfnamefont {E.}~\bibnamefont {Hugues}},
  \bibinfo {author} {\bibfnamefont {O.}~\bibnamefont {Sporns}},\ and\ \bibinfo
  {author} {\bibfnamefont {G.}~\bibnamefont {Deco}},\ }\bibfield  {title}
  {\bibinfo {title} {Role of local network oscillations in resting-state
  functional connectivity},\ }\href@noop {} {\bibfield  {journal} {\bibinfo
  {journal} {Neuroimage}\ }\textbf {\bibinfo {volume} {57}},\ \bibinfo {pages}
  {130} (\bibinfo {year} {2011})}\BibitemShut {NoStop}%
\bibitem [{\citenamefont {Choi}\ and\ \citenamefont
  {Mihalas}(2019)}]{choi2019synchronization}%
  \BibitemOpen
  \bibfield  {author} {\bibinfo {author} {\bibfnamefont {H.}~\bibnamefont
  {Choi}}\ and\ \bibinfo {author} {\bibfnamefont {S.}~\bibnamefont {Mihalas}},\
  }\bibfield  {title} {\bibinfo {title} {Synchronization dependent on spatial
  structures of a mesoscopic whole-brain network},\ }\href@noop {} {\bibfield
  {journal} {\bibinfo  {journal} {PLoS Computational Biology}\ }\textbf
  {\bibinfo {volume} {15}},\ \bibinfo {pages} {e1006978} (\bibinfo {year}
  {2019})}\BibitemShut {NoStop}%
\bibitem [{fig({\natexlab{a}})}]{fig_s1}%
  \BibitemOpen
  \href@noop {} {\bibinfo {title} {See Supplemental Material for Fig. S1}}
  \BibitemShut {NoStop}%
\bibitem [{\citenamefont {Budzinski}\ \emph {et~al.}(2022)\citenamefont
  {Budzinski}, \citenamefont {Nguyen}, \citenamefont {{\DJ}o{\`a}n},
  \citenamefont {Min{\'a}{\v{c}}}, \citenamefont {Sejnowski},\ and\
  \citenamefont {Muller}}]{budzinski2022geometry}%
  \BibitemOpen
  \bibfield  {author} {\bibinfo {author} {\bibfnamefont {R.~C.}\ \bibnamefont
  {Budzinski}}, \bibinfo {author} {\bibfnamefont {T.~T.}\ \bibnamefont
  {Nguyen}}, \bibinfo {author} {\bibfnamefont {J.}~\bibnamefont
  {{\DJ}o{\`a}n}}, \bibinfo {author} {\bibfnamefont {J.}~\bibnamefont
  {Min{\'a}{\v{c}}}}, \bibinfo {author} {\bibfnamefont {T.~J.}\ \bibnamefont
  {Sejnowski}},\ and\ \bibinfo {author} {\bibfnamefont {L.~E.}\ \bibnamefont
  {Muller}},\ }\bibfield  {title} {\bibinfo {title} {Geometry unites synchrony,
  chimeras, and waves in nonlinear oscillator networks},\ }\href@noop {}
  {\bibfield  {journal} {\bibinfo  {journal} {Chaos: An Interdisciplinary
  Journal of Nonlinear Science}\ }\textbf {\bibinfo {volume} {32}},\ \bibinfo
  {pages} {031104} (\bibinfo {year} {2022})}\BibitemShut {NoStop}%
\bibitem [{\citenamefont {Yeung}\ and\ \citenamefont
  {Strogatz}(1999)}]{yeung1999time}%
  \BibitemOpen
  \bibfield  {author} {\bibinfo {author} {\bibfnamefont {M.~K.~S.}\
  \bibnamefont {Yeung}}\ and\ \bibinfo {author} {\bibfnamefont {S.~H.}\
  \bibnamefont {Strogatz}},\ }\bibfield  {title} {\bibinfo {title} {Time delay
  in the kuramoto model of coupled oscillators},\ }\href@noop {} {\bibfield
  {journal} {\bibinfo  {journal} {Physical Review Letters}\ }\textbf {\bibinfo
  {volume} {82}},\ \bibinfo {pages} {648} (\bibinfo {year} {1999})}\BibitemShut
  {NoStop}%
\bibitem [{\citenamefont {Laing}(2016)}]{laing2016travelling}%
  \BibitemOpen
  \bibfield  {author} {\bibinfo {author} {\bibfnamefont {C.~R.}\ \bibnamefont
  {Laing}},\ }\bibfield  {title} {\bibinfo {title} {Travelling waves in arrays
  of delay-coupled phase oscillators},\ }\href@noop {} {\bibfield  {journal}
  {\bibinfo  {journal} {Chaos: An Interdisciplinary Journal of Nonlinear
  Science}\ }\textbf {\bibinfo {volume} {26}},\ \bibinfo {pages} {094802}
  (\bibinfo {year} {2016})}\BibitemShut {NoStop}%
\bibitem [{\citenamefont {Timms}\ and\ \citenamefont
  {English}(2014)}]{timms2014synchronization}%
  \BibitemOpen
  \bibfield  {author} {\bibinfo {author} {\bibfnamefont {L.}~\bibnamefont
  {Timms}}\ and\ \bibinfo {author} {\bibfnamefont {L.~Q.}\ \bibnamefont
  {English}},\ }\bibfield  {title} {\bibinfo {title} {Synchronization in
  phase-coupled kuramoto oscillator networks with axonal delay and synaptic
  plasticity},\ }\href@noop {} {\bibfield  {journal} {\bibinfo  {journal}
  {Physical Review E}\ }\textbf {\bibinfo {volume} {89}},\ \bibinfo {pages}
  {032906} (\bibinfo {year} {2014})}\BibitemShut {NoStop}%
\bibitem [{\citenamefont {Ermentrout}\ and\ \citenamefont
  {Ko}(2009)}]{ermentrout2009delays}%
  \BibitemOpen
  \bibfield  {author} {\bibinfo {author} {\bibfnamefont {B.}~\bibnamefont
  {Ermentrout}}\ and\ \bibinfo {author} {\bibfnamefont {T.~W.}\ \bibnamefont
  {Ko}},\ }\bibfield  {title} {\bibinfo {title} {Delays and weakly coupled
  neuronal oscillators},\ }\href@noop {} {\bibfield  {journal} {\bibinfo
  {journal} {Philosophical Transactions of the Royal Society A: Mathematical,
  Physical and Engineering Sciences}\ }\textbf {\bibinfo {volume} {367}},\
  \bibinfo {pages} {1097} (\bibinfo {year} {2009})}\BibitemShut {NoStop}%
\bibitem [{\citenamefont {Ko}\ \emph {et~al.}(2004)\citenamefont {Ko},
  \citenamefont {Jeong},\ and\ \citenamefont {Moon}}]{ko2004wave}%
  \BibitemOpen
  \bibfield  {author} {\bibinfo {author} {\bibfnamefont {T.~W.}\ \bibnamefont
  {Ko}}, \bibinfo {author} {\bibfnamefont {S.~O.}\ \bibnamefont {Jeong}},\ and\
  \bibinfo {author} {\bibfnamefont {H.~T.}\ \bibnamefont {Moon}},\ }\bibfield
  {title} {\bibinfo {title} {Wave formation by time delays in randomly coupled
  oscillators},\ }\href@noop {} {\bibfield  {journal} {\bibinfo  {journal}
  {Physical Review E}\ }\textbf {\bibinfo {volume} {69}},\ \bibinfo {pages}
  {056106} (\bibinfo {year} {2004})}\BibitemShut {NoStop}%
\bibitem [{\citenamefont {Grabow}\ \emph {et~al.}(2010)\citenamefont {Grabow},
  \citenamefont {Hill}, \citenamefont {Grosskinsky},\ and\ \citenamefont
  {Timme}}]{grabow2010small}%
  \BibitemOpen
  \bibfield  {author} {\bibinfo {author} {\bibfnamefont {C.}~\bibnamefont
  {Grabow}}, \bibinfo {author} {\bibfnamefont {S.~M.}\ \bibnamefont {Hill}},
  \bibinfo {author} {\bibfnamefont {S.}~\bibnamefont {Grosskinsky}},\ and\
  \bibinfo {author} {\bibfnamefont {M.}~\bibnamefont {Timme}},\ }\bibfield
  {title} {\bibinfo {title} {Do small worlds synchronize fastest?},\
  }\href@noop {} {\bibfield  {journal} {\bibinfo  {journal} {EPL (Europhysics
  Letters)}\ }\textbf {\bibinfo {volume} {90}},\ \bibinfo {pages} {48002}
  (\bibinfo {year} {2010})}\BibitemShut {NoStop}%
\bibitem [{\citenamefont {Sorrentino}\ \emph {et~al.}(2016)\citenamefont
  {Sorrentino}, \citenamefont {Pecora}, \citenamefont {Hagerstrom},
  \citenamefont {Murphy},\ and\ \citenamefont {Roy}}]{sorrentino2016complete}%
  \BibitemOpen
  \bibfield  {author} {\bibinfo {author} {\bibfnamefont {F.}~\bibnamefont
  {Sorrentino}}, \bibinfo {author} {\bibfnamefont {L.~M.}\ \bibnamefont
  {Pecora}}, \bibinfo {author} {\bibfnamefont {A.~M.}\ \bibnamefont
  {Hagerstrom}}, \bibinfo {author} {\bibfnamefont {T.~E.}\ \bibnamefont
  {Murphy}},\ and\ \bibinfo {author} {\bibfnamefont {R.}~\bibnamefont {Roy}},\
  }\bibfield  {title} {\bibinfo {title} {Complete characterization of the
  stability of cluster synchronization in complex dynamical networks},\
  }\href@noop {} {\bibfield  {journal} {\bibinfo  {journal} {Science Advances}\
  }\textbf {\bibinfo {volume} {2}},\ \bibinfo {pages} {e1501737} (\bibinfo
  {year} {2016})}\BibitemShut {NoStop}%
\bibitem [{\citenamefont {Sugitani}\ \emph {et~al.}(2021)\citenamefont
  {Sugitani}, \citenamefont {Zhang},\ and\ \citenamefont
  {Motter}}]{sugitani2021synchronizing}%
  \BibitemOpen
  \bibfield  {author} {\bibinfo {author} {\bibfnamefont {Y.}~\bibnamefont
  {Sugitani}}, \bibinfo {author} {\bibfnamefont {Y.}~\bibnamefont {Zhang}},\
  and\ \bibinfo {author} {\bibfnamefont {A.~E.}\ \bibnamefont {Motter}},\
  }\bibfield  {title} {\bibinfo {title} {Synchronizing chaos with
  imperfections},\ }\href@noop {} {\bibfield  {journal} {\bibinfo  {journal}
  {Physical Review Letters}\ }\textbf {\bibinfo {volume} {126}},\ \bibinfo
  {pages} {164101} (\bibinfo {year} {2021})}\BibitemShut {NoStop}%
\bibitem [{\citenamefont {Davis}(1979)}]{davis1979}%
  \BibitemOpen
  \bibfield  {author} {\bibinfo {author} {\bibfnamefont {P.}~\bibnamefont
  {Davis}},\ }\href@noop {} {\emph {\bibinfo {title} {Circulant matrices}}}\
  (\bibinfo  {publisher} {John Wiley \& Sons},\ \bibinfo {year}
  {1979})\BibitemShut {NoStop}%
\bibitem [{fig({\natexlab{b}})}]{fig_s2}%
  \BibitemOpen
  \href@noop {} {\bibinfo {title} {See Supplemental Material for Fig. S2}}
  \BibitemShut {NoStop}%
\bibitem [{fig({\natexlab{c}})}]{fig_s3}%
  \BibitemOpen
  \href@noop {} {\bibinfo {title} {See Supplemental Material for Fig. S3}}
  \BibitemShut {NoStop}%
\bibitem [{mov({\natexlab{a}})}]{movie_s1}%
  \BibitemOpen
  \href@noop {} {\bibinfo {title} {See Supplemental Material for Movie S1}}
  \BibitemShut {NoStop}%
\bibitem [{mov({\natexlab{b}})}]{movie_s2}%
  \BibitemOpen
  \href@noop {} {\bibinfo {title} {See Supplemental Material for Movie S2}}
  \BibitemShut {NoStop}%
\bibitem [{\citenamefont {Hagmann}\ \emph {et~al.}(2008)\citenamefont
  {Hagmann}, \citenamefont {Cammoun}, \citenamefont {Gigandet}, \citenamefont
  {Meuli}, \citenamefont {Honey}, \citenamefont {Wedeen},\ and\ \citenamefont
  {Sporns}}]{hagmann2008mapping}%
  \BibitemOpen
  \bibfield  {author} {\bibinfo {author} {\bibfnamefont {P.}~\bibnamefont
  {Hagmann}}, \bibinfo {author} {\bibfnamefont {L.}~\bibnamefont {Cammoun}},
  \bibinfo {author} {\bibfnamefont {X.}~\bibnamefont {Gigandet}}, \bibinfo
  {author} {\bibfnamefont {R.}~\bibnamefont {Meuli}}, \bibinfo {author}
  {\bibfnamefont {C.~J.}\ \bibnamefont {Honey}}, \bibinfo {author}
  {\bibfnamefont {V.~J.}\ \bibnamefont {Wedeen}},\ and\ \bibinfo {author}
  {\bibfnamefont {O.}~\bibnamefont {Sporns}},\ }\bibfield  {title} {\bibinfo
  {title} {Mapping the structural core of human cerebral cortex},\ }\href@noop
  {} {\bibfield  {journal} {\bibinfo  {journal} {PLoS Biology}\ }\textbf
  {\bibinfo {volume} {6}},\ \bibinfo {pages} {e159} (\bibinfo {year}
  {2008})}\BibitemShut {NoStop}%
\bibitem [{\citenamefont {Muller}\ \emph {et~al.}(2014)\citenamefont {Muller},
  \citenamefont {Destexhe},\ and\ \citenamefont
  {Rudolph-Lilith}}]{muller2014brain}%
  \BibitemOpen
  \bibfield  {author} {\bibinfo {author} {\bibfnamefont {L.}~\bibnamefont
  {Muller}}, \bibinfo {author} {\bibfnamefont {A.}~\bibnamefont {Destexhe}},\
  and\ \bibinfo {author} {\bibfnamefont {M.}~\bibnamefont {Rudolph-Lilith}},\
  }\bibfield  {title} {\bibinfo {title} {Brain networks: small-worlds, after
  all?},\ }\href@noop {} {\bibfield  {journal} {\bibinfo  {journal} {New
  Journal of Physics}\ }\textbf {\bibinfo {volume} {16}},\ \bibinfo {pages}
  {105004} (\bibinfo {year} {2014})}\BibitemShut {NoStop}%
\bibitem [{\citenamefont {Swadlow}\ and\ \citenamefont
  {Waxman}(2012)}]{swadlow2012axonal}%
  \BibitemOpen
  \bibfield  {author} {\bibinfo {author} {\bibfnamefont {H.~A.}\ \bibnamefont
  {Swadlow}}\ and\ \bibinfo {author} {\bibfnamefont {S.~G.}\ \bibnamefont
  {Waxman}},\ }\bibfield  {title} {\bibinfo {title} {Axonal conduction
  delays},\ }\href@noop {} {\bibfield  {journal} {\bibinfo  {journal}
  {Scholarpedia}\ }\textbf {\bibinfo {volume} {7}},\ \bibinfo {pages} {1451}
  (\bibinfo {year} {2012})}\BibitemShut {NoStop}%
\bibitem [{mov({\natexlab{c}})}]{movie_s3}%
  \BibitemOpen
  \href@noop {} {\bibinfo {title} {See Supplemental Material for Movie S3}}
 \BibitemShut {NoStop}%
\bibitem [{mov({\natexlab{d}})}]{movie_s4}%
  \BibitemOpen
  \href@noop {} {\bibinfo {title} {See Supplemental Material for Movie S4}}
  \BibitemShut {NoStop}%
\bibitem [{\citenamefont {Litvina}\ \emph {et~al.}(2019)\citenamefont
  {Litvina}, \citenamefont {Adams}, \citenamefont {Barth}, \citenamefont
  {Bruchez}, \citenamefont {Carson}, \citenamefont {Chung}, \citenamefont
  {Dupre}, \citenamefont {Frank}, \citenamefont {Gates}, \citenamefont {Harris}
  \emph {et~al.}}]{litvina2019brain}%
  \BibitemOpen
  \bibfield  {author} {\bibinfo {author} {\bibfnamefont {E.}~\bibnamefont
  {Litvina}}, \bibinfo {author} {\bibfnamefont {A.}~\bibnamefont {Adams}},
  \bibinfo {author} {\bibfnamefont {A.}~\bibnamefont {Barth}}, \bibinfo
  {author} {\bibfnamefont {M.}~\bibnamefont {Bruchez}}, \bibinfo {author}
  {\bibfnamefont {J.}~\bibnamefont {Carson}}, \bibinfo {author} {\bibfnamefont
  {J.~E.}\ \bibnamefont {Chung}}, \bibinfo {author} {\bibfnamefont {K.~B.}\
  \bibnamefont {Dupre}}, \bibinfo {author} {\bibfnamefont {L.~M.}\ \bibnamefont
  {Frank}}, \bibinfo {author} {\bibfnamefont {K.~M.}\ \bibnamefont {Gates}},
  \bibinfo {author} {\bibfnamefont {K.~M.}\ \bibnamefont {Harris}}, \emph
  {et~al.},\ }\bibfield  {title} {\bibinfo {title} {Brain initiative:
  cutting-edge tools and resources for the community},\ }\href@noop {}
  {\bibfield  {journal} {\bibinfo  {journal} {Journal of Neuroscience}\
  }\textbf {\bibinfo {volume} {39}},\ \bibinfo {pages} {8275} (\bibinfo {year}
  {2019})}\BibitemShut {NoStop}%
\bibitem [{\citenamefont {Witthaut}\ \emph {et~al.}(2022)\citenamefont
  {Witthaut}, \citenamefont {Hellmann}, \citenamefont {Kurths}, \citenamefont
  {Kettemann}, \citenamefont {Meyer-Ortmanns},\ and\ \citenamefont
  {Timme}}]{witthaut2022collective}%
  \BibitemOpen
  \bibfield  {author} {\bibinfo {author} {\bibfnamefont {D.}~\bibnamefont
  {Witthaut}}, \bibinfo {author} {\bibfnamefont {F.}~\bibnamefont {Hellmann}},
  \bibinfo {author} {\bibfnamefont {J.}~\bibnamefont {Kurths}}, \bibinfo
  {author} {\bibfnamefont {S.}~\bibnamefont {Kettemann}}, \bibinfo {author}
  {\bibfnamefont {H.}~\bibnamefont {Meyer-Ortmanns}},\ and\ \bibinfo {author}
  {\bibfnamefont {M.}~\bibnamefont {Timme}},\ }\bibfield  {title} {\bibinfo
  {title} {Collective nonlinear dynamics and self-organization in decentralized
  power grids},\ }\href@noop {} {\bibfield  {journal} {\bibinfo  {journal}
  {Reviews of Modern Physics}\ }\textbf {\bibinfo {volume} {94}},\ \bibinfo
  {pages} {015005} (\bibinfo {year} {2022})}\BibitemShut {NoStop}%
\bibitem [{\citenamefont {Motter}\ \emph {et~al.}(2013)\citenamefont {Motter},
  \citenamefont {Myers}, \citenamefont {Anghel},\ and\ \citenamefont
  {Nishikawa}}]{motter2013spontaneous}%
  \BibitemOpen
  \bibfield  {author} {\bibinfo {author} {\bibfnamefont {A.~E.}\ \bibnamefont
  {Motter}}, \bibinfo {author} {\bibfnamefont {S.~A.}\ \bibnamefont {Myers}},
  \bibinfo {author} {\bibfnamefont {M.}~\bibnamefont {Anghel}},\ and\ \bibinfo
  {author} {\bibfnamefont {T.}~\bibnamefont {Nishikawa}},\ }\bibfield  {title}
  {\bibinfo {title} {Spontaneous synchrony in power-grid networks},\
  }\href@noop {} {\bibfield  {journal} {\bibinfo  {journal} {Nature Physics}\
  }\textbf {\bibinfo {volume} {9}},\ \bibinfo {pages} {191} (\bibinfo {year}
  {2013})}\BibitemShut {NoStop}%
\end{thebibliography}
%

\end{document}